\documentclass[12pt]{amsart}
\usepackage{amsmath, amsthm, amscd, amsfonts, amssymb, graphicx, color,float,pgf,tikz}
\usepackage{amssymb,fontenc}
\usepackage{latexsym,wasysym,mathrsfs}
\usepackage{hyperref}
\usetikzlibrary{arrows}
\usepackage[square,numbers,sort&compress]{natbib}

\textheight 22.5truecm 
\textwidth 14.5truecm

\makeatletter \oddsidemargin.9375in \evensidemargin \oddsidemargin
\marginparwidth1.9375in \makeatother


\newtheorem{theorem}{Theorem}[section]
\newtheorem{lemma}[theorem]{Lemma}
\newtheorem{proposition}[theorem]{Proposition}
\newtheorem{corollary}[theorem]{Corollary}

\theoremstyle{definition}
\newtheorem{definition}[theorem]{Definition}
\newtheorem{example}[theorem]{Example}

\numberwithin{equation}{section}

\newcommand{\be}{\begin{equation}}
\newcommand{\ee}{\end{equation}}

\numberwithin{equation}{section}

\usepackage{etoolbox}
\makeatletter
\patchcmd{\@settitle}{\uppercasenonmath\@title}{}{}{}
\patchcmd{\@setauthors}{\MakeUppercase}{}{}{}
\makeatother

\allowdisplaybreaks
\begin{document}

\setcounter{page}{1}
\title[Controlled Integral Frames for Hilbert $C^{\ast}$-Modules]{Controlled Integral Frames for Hilbert $C^{\ast}$-Modules}

\author[Hatim LABRIGUI$^{*}$ and Samir KABBAJ]{Hatim LABRIGUI$^1$$^{\ast}$ \MakeLowercase{and} Samir KABBAJ$^1$}

\address{$^{1}$Department of Mathematics, Ibn Tofail University, B.P. 133, Kenitra, Morocco}
\email{\textcolor[rgb]{0.00,0.00,0.84}{  hlabrigui75@gmail.com;  samkabbaj@yahoo.fr}}

\subjclass[2010]{42C15,41A58}

\keywords{Integral Frames, Integral $\ast$-frame, Controlled Integral Frames, controlled Integral $\ast$-frame, $C^{\ast}$-algebra, Hilbert $\mathcal{A}$-modules.\\
\indent
\\
\indent $^{*}$ Corresponding author}
\maketitle
\begin{abstract}
The notion of controlled frames for Hilbert spaces were introduced by Balazs, Antoine and Grybos to improve the numerical efficiency of iterative algorithms for inverting the frame operator. Controlled Frame Theory has a great revolution in recent years. This Theory have been extended from Hilbert spaces to Hilbert  $C^{\ast}$-modules. In this paper we introduce and study the extension of this notion to integral frame for Hilbert $C^{\ast}$-module. Also we give some characterizations between integral frame in Hilbert $C^{\ast}$-module.
\end{abstract}
\section{Introduction and preliminaries}
The concept of frames in Hilbert spaces has been introduced by Duffin and Schaeffer \cite{Duf} in 1952 to study some deep problems in nonharmonic Fourier series. After the fundamental paper \cite{13} by Daubechies, Grossman and Meyer, frames theory began to be widely used, particularly in the more specialized context of wavelet frames and Gabor frames \cite{Gab}.

Hilbert $C^{\ast}$-module arose as generalization of the  Hilbert space notion. The basic idea was to consider modules over $C^{\ast}$-algebras instead of linear spaces and to allow the inner product to take values in the $C^{\ast}$-algebras \cite{28}.
Continuous frames defined by Ali, Antoine and Gazeau \cite{STAJP}. Gabardo and Han in \cite{14} called these kinds frames or frames associated with measurable spaces. For more details, the reader can refer to \cite{MR1}, \cite{MR2} and \cite{ARAN}.\\
The goal of this article is the introduction and the study of the concept of Controlled integral frames for Hilbert $C^{\ast}$-module. Also we give some characterizations between integral frame in Hilbert $C^{\ast}$-module, and we give some characterizations.

In the following we briefly recall the definitions and basic properties of $C^{\ast}$-algebra and Hilbert $\mathcal{A}$-modules. Our references for $C^{\ast}$-algebras are \cite{{Dav},{Con}}. For a $C^{\ast}$-algebra $\mathcal{A}$, if $a\in\mathcal{A}$ is positive we write $a\geq 0$ and $\mathcal{A}^{+}$ denotes the set of positive elements of $\mathcal{A}$.
\begin{definition}\cite{Con}.	
	Let $ \mathcal{A} $ be a unital $C^{\ast}$-algebra and $\mathcal{H}$ be a left $ \mathcal{A} $-module, such that the linear structures of $\mathcal{A}$ and $ \mathcal{H} $ are compatible. $\mathcal{H}$ is a pre-Hilbert $\mathcal{A}$-module if $\mathcal{H}$ is equipped with an $\mathcal{A}$-valued inner product $\langle.,.\rangle_{\mathcal{A}} :\mathcal{H}\times\mathcal{H}\rightarrow\mathcal{A}$, such that is sesquilinear, positive definite and respects the module action. In the other words,
	\begin{itemize}
		\item [(i)] $ \langle x,x\rangle_{\mathcal{A}}\geq0 $, for all $ x\in\mathcal{H} $, and $ \langle x,x\rangle_{\mathcal{A}}=0$ if and only if $x=0$.
		\item [(ii)] $\langle ax+y,z\rangle_{\mathcal{A}}=a\langle x,z\rangle_{\mathcal{A}}+\langle y,z\rangle_{\mathcal{A}},$ for all $a\in\mathcal{A}$ and $x,y,z\in\mathcal{H}$.
		\item[(iii)] $ \langle x,y\rangle_{\mathcal{A}}=\langle y,x\rangle_{\mathcal{A}}^{\ast} $, for all $x,y\in\mathcal{H}$.
	\end{itemize}	 
	For $x\in\mathcal{H}, $ we define $||x||=||\langle x,x\rangle_{\mathcal{A}}||^{\frac{1}{2}}$. If $\mathcal{H}$ is complete with $||.||$, it is called a Hilbert $\mathcal{A}$-module or a Hilbert $C^{\ast}$-module over $\mathcal{A}$.\\
	 For every $a$ in $C^{\ast}$-algebra $\mathcal{A}$, we have $|a|=(a^{\ast}a)^{\frac{1}{2}}$ and the $\mathcal{A}$-valued norm on $\mathcal{H}$ is defined by $|x|=\langle x, x\rangle_{\mathcal{A}}^{\frac{1}{2}}$, for all $x\in\mathcal{H}$.
	
	Let $\mathcal{H}$ and $\mathcal{K}$ be two Hilbert $\mathcal{A}$-modules, a map $T:\mathcal{H}\rightarrow\mathcal{K}$ is said to be adjointable if there exists a map $T^{\ast}:\mathcal{K}\rightarrow\mathcal{H}$ such that $\langle Tx,y\rangle_{\mathcal{A}}=\langle x,T^{\ast}y\rangle_{\mathcal{A}}$ for all $x\in\mathcal{H}$ and $y\in\mathcal{K}$.
	
We reserve the notation $End_{\mathcal{A}}^{\ast}(\mathcal{H},\mathcal{K})$ for the set of all adjointable operators from $\mathcal{H}$ to $\mathcal{K}$ and $End_{\mathcal{A}}^{\ast}(\mathcal{H},\mathcal{H})$ is abbreviated to $End_{\mathcal{A}}^{\ast}(\mathcal{H})$.

\end{definition}

The following lemmas will be used to prove our mains results.
\begin{lemma} \label{l1} \cite{Pas}.
	Let $\mathcal{H}$ be a Hilbert $\mathcal{A}$-module. If $T\in End_{\mathcal{A}}^{\ast}(\mathcal{H})$, then $$\langle Tx,Tx\rangle_{\mathcal{A}}\leq\|T\|^{2}\langle x,x\rangle_{\mathcal{A}}, \qquad x\in\mathcal{H}.$$
\end{lemma}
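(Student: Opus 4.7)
The plan is to deduce the pointwise inequality from the C*-algebraic positivity of $\|T\|^{2}\cdot\id - T^{\ast}T$ inside the C*-algebra $\mathcal{B}:=End_{\mathcal{A}}^{\ast}(\mathcal{H})$, and then transfer the positivity from $\mathcal{B}$ to the $\mathcal{A}$-valued inner product. So I would proceed in four short steps.

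First, I would recall that $\mathcal{B}$ is a unital C*-algebra under the operator norm, with involution given by taking adjoints; in particular the C*-identity $\|T^{\ast}T\|=\|T\|^{2}$ holds, and $T^{\ast}T$ is self-adjoint with $T^{\ast}T\geq 0$ in $\mathcal{B}$. Second, since $T^{\ast}T$ is a positive element of $\mathcal{B}$ with norm $\|T\|^{2}$, its spectrum lies in $[0,\|T\|^{2}]$, so continuous functional calculus gives that
\[
A:=\|T\|^{2}\cdot\id - T^{\ast}T
\]
has non-negative spectrum and is therefore positive in $\mathcal{B}$. Third, by the existence of a square root in the C*-algebra $\mathcal{B}$, we can write $A=S^{\ast}S$ for some $S\in\mathcal{B}$.

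Fourth, the desired inequality is then the bilinear translation of $A\geq 0$. For any $x\in\mathcal{H}$,
\[
\|T\|^{2}\langle x,x\rangle_{\mathcal{A}}-\langle Tx,Tx\rangle_{\mathcal{A}}=\langle Ax,x\rangle_{\mathcal{A}}=\langle S^{\ast}Sx,x\rangle_{\mathcal{A}}=\langle Sx,Sx\rangle_{\mathcal{A}}\geq 0,
\]
where the first equality uses $\langle T^{\ast}Tx,x\rangle_{\mathcal{A}}=\langle Tx,Tx\rangle_{\mathcal{A}}$ (adjointness of $T$), the second uses $A=S^{\ast}S$, and the final inequality is the positivity axiom of the $\mathcal{A}$-valued inner product. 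Rearranging gives exactly $\langle Tx,Tx\rangle_{\mathcal{A}}\leq\|T\|^{2}\langle x,x\rangle_{\mathcal{A}}$ in $\mathcal{A}$.

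The main obstacle, or rather the main conceptual point to get right, is the second step: passing from the norm bound $\|T^{\ast}T\|=\|T\|^{2}$ to positivity of $\|T\|^{2}\cdot\id - T^{\ast}T$ inside the C*-algebra $\mathcal{B}$. This requires that $\mathcal{B}=End_{\mathcal{A}}^{\ast}(\mathcal{H})$ genuinely is a C*-algebra (so that continuous functional calculus and square roots are available); once this is in hand, the remaining steps are essentially formal manipulations of adjoints and the inner product.
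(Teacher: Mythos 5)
Your proof is correct. The paper offers no proof of this lemma at all --- it is imported verbatim from Paschke's paper \cite{Pas} (see also Lance's book) --- and your argument is exactly the standard one behind that citation: $End_{\mathcal{A}}^{\ast}(\mathcal{H})$ is a unital $C^{\ast}$-algebra, so $\|T\|^{2}\cdot\id - T^{\ast}T$ is positive and admits a square root $S$, whence $\|T\|^{2}\langle x,x\rangle_{\mathcal{A}}-\langle Tx,Tx\rangle_{\mathcal{A}}=\langle Sx,Sx\rangle_{\mathcal{A}}\geq 0$ by the positivity axiom of the inner product. The one hypothesis worth flagging explicitly is the first step (that $End_{\mathcal{A}}^{\ast}(\mathcal{H})$ with the adjoint involution satisfies the $C^{\ast}$-identity), since that is where the whole argument lives; you have identified this correctly as the crux.
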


\begin{lemma} \label{l2} \cite{Ara}.
	Let $\mathcal{H}$ and $\mathcal{K}$ be two Hilbert $\mathcal{A}$-modules and $T\in End_{\mathcal{A}}^{\ast}(\mathcal{H},\mathcal{K})$. Then the following statements are equivalent:
	\begin{itemize}
		\item [(i)] $T$ is surjective.
		\item [(ii)] $T^{\ast}$ is bounded below associted to norm, i.e., there is $m>0$ such that $ m\|x\|\leq \|T^{\ast}x\|$, for all $x\in\mathcal{K}$.
		\item [(iii)] $T^{\ast}$ is bounded below associted to the inner product, i.e., there is $m'>0$ such that $ m'\langle x,x\rangle_{\mathcal{A}} \leq \langle T^{\ast}x,T^{\ast}x\rangle_{\mathcal{A}}$, for all $x\in\mathcal{K}$.
	\end{itemize}
\end{lemma}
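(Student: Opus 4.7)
The plan is to establish the cycle (iii) $\Rightarrow$ (ii) $\Rightarrow$ (i) $\Rightarrow$ (iii). The first implication is almost a tautology: applying the $C^{\ast}$-norm to both sides of (iii) and using $\|\langle z,z\rangle_{\mathcal{A}}\|=\|z\|^{2}$ gives $\|T^{\ast}x\|^{2}\geq m'\|x\|^{2}$, so (ii) holds with $m=\sqrt{m'}$. For (ii) $\Rightarrow$ (i), the norm lower bound forces $T^{\ast}$ to be injective with closed range. I would then invoke the closed-range theorem for adjointable operators between Hilbert $\mathcal{A}$-modules to conclude that $T$ likewise has closed range and that $\mathcal{K}=\ran(T)\oplus\ker(T^{\ast})$. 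Since $T^{\ast}$ is injective, $\ker(T^{\ast})=\{0\}$ and $\ran(T)=\mathcal{K}$.

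The nontrivial direction is (i) $\Rightarrow$ (iii). I would route the argument through the operator $TT^{\ast}\in End_{\mathcal{A}}^{\ast}(\mathcal{K})$. Surjectivity of $T$ makes $\ran(T)$ closed, hence $\ker(T)$ is orthogonally complemented in $\mathcal{H}$ and the closed-range theorem yields $\ran(T^{\ast})=\ker(T)^{\perp}$. These two facts allow one to factor $TT^{\ast}$ as the composition of the bijection $T^{\ast}\colon\mathcal{K}\to\ker(T)^{\perp}$ with the bijection $T|_{\ker(T)^{\perp}}\colon\ker(T)^{\perp}\to\mathcal{K}$; injectivity of $TT^{\ast}$ is also independently visible via $\ker(TT^{\ast})=\ker(T^{\ast})=\ran(T)^{\perp}=\{0\}$. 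Hence $TT^{\ast}$ is a positive invertible element of the $C^{\ast}$-algebra $End_{\mathcal{A}}^{\ast}(\mathcal{K})$, so the standard spectral fact inside a unital $C^{\ast}$-algebra gives $TT^{\ast}\geq\|(TT^{\ast})^{-1}\|^{-1}\,I_{\mathcal{K}}$. Pairing this operator inequality with $x$ produces
\[
\langle T^{\ast}x,T^{\ast}x\rangle_{\mathcal{A}}=\langle TT^{\ast}x,x\rangle_{\mathcal{A}}\geq m'\,\langle x,x\rangle_{\mathcal{A}}
\]
with $m'=\|(TT^{\ast})^{-1}\|^{-1}$, which is (iii).

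The main obstacle is precisely the passage from a scalar norm bound on $T^{\ast}$ to an $\mathcal{A}$-valued operator inequality: in a Hilbert $C^{\ast}$-module the former does not on its own imply the latter. The $C^{\ast}$-algebraic principle that a positive element is invertible if and only if it is bounded below by a positive multiple of the unit is what bridges the gap, and it is why the argument must route through invertibility of $TT^{\ast}$ rather than attempt a direct inner-product estimate from the open mapping theorem. A secondary, but still nontrivial, ingredient is the closed-range/orthogonal-complement package for adjointable operators on Hilbert $\mathcal{A}$-modules, which replaces the automatic Hilbert-space decomposition $\mathcal{K}=\overline{\ran T}\oplus\ker T^{\ast}$ that one would use in the scalar case.
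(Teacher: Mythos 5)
Your proof is correct. Note that the paper itself offers no proof of this lemma: it is quoted verbatim from the reference [Ara] (Aramba\v{s}i\'{c}), so there is no in-paper argument to compare against. Your cycle (iii) $\Rightarrow$ (ii) $\Rightarrow$ (i) $\Rightarrow$ (iii) is the standard route and each step checks out: the norm step uses that $0\leq a\leq b$ implies $\|a\|\leq\|b\|$ in a $C^{\ast}$-algebra; the passage from the norm lower bound to surjectivity correctly goes through injectivity and closedness of $\ran(T^{\ast})$ and then the closed-range/orthogonal-decomposition theorem for adjointable operators (Lance), which, as you rightly stress, is not automatic in Hilbert $C^{\ast}$-modules the way it is in Hilbert spaces; and the hard direction (i) $\Rightarrow$ (iii) is exactly the invertibility of $TT^{\ast}$ together with the spectral bound $TT^{\ast}\geq\|(TT^{\ast})^{-1}\|^{-1}I_{\mathcal{K}}$ --- which is precisely the content of the paper's Lemma 1.4(ii), quoted from [Deh]. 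In other words, your argument reconstructs the lemma from the same toolkit the paper assembles elsewhere, and your closing remark correctly identifies the essential point: a scalar lower bound on $T^{\ast}$ does not directly yield an $\mathcal{A}$-valued operator inequality, so one must pass through the invertibility of the positive element $TT^{\ast}$ in $End_{\mathcal{A}}^{\ast}(\mathcal{K})$.
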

\begin{lemma}\cite{Deh}\label{l3}
	Let $\mathcal{H}$ and $\mathcal{K}$ be two Hilbert $\mathcal{A}$-modules and $T\in End_{\mathcal{A}}^{\ast}(\mathcal{H},\mathcal{K})$.
	\begin{itemize}
		\item [(i)] If $T$ is injective and $T$ has closed range, then the adjointable map $T^{\ast}T$ is invertible and $$\|(T^{\ast}T)^{-1}\|^{-1}I_{\mathcal{H}}\leq T^{\ast}T\leq\|T\|^{2}I_{\mathcal{H}}.$$
		\item  [(ii)]	If $T$ is surjective, then the adjointable map $TT^{\ast}$ is invertible and $$\|(TT^{\ast})^{-1}\|^{-1}I_{\mathcal{K}}\leq TT^{\ast}\leq\|T\|^{2}I_{\mathcal{K}}.$$
	\end{itemize}	
\end{lemma}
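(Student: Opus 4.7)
The plan is to prove (i) first and then obtain (ii) by applying (i) to $T^{\ast}$. In each case the argument splits into two stages: first, establish that the positive self-adjoint operator in question ($T^{\ast}T$ or $TT^{\ast}$) is invertible in the $C^{\ast}$-algebra $End_{\mathcal{A}}^{\ast}(\mathcal{H})$ (respectively $End_{\mathcal{A}}^{\ast}(\mathcal{K})$); second, read off the two-sided sandwich inequality from Lemma \ref{l1} together with standard $C^{\ast}$-algebra spectral facts.

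For (i), the key observation is that $T$ injective with closed range forces $T^{\ast}$ to be surjective. In the Hilbert $\mathcal{A}$-module setting, an adjointable operator with closed range satisfies $\ran(T^{\ast}) = \ker(T)^{\perp}$, which collapses to all of $\mathcal{H}$ when $\ker(T) = 0$. Lemma \ref{l2} applied to $T^{\ast}$ (so that its adjoint, $T$, plays the role of ``$T^{\ast}$'' in the lemma) then yields a constant $m>0$ with
\[
m\,\langle x,x\rangle_{\mathcal{A}} \;\leq\; \langle Tx,Tx\rangle_{\mathcal{A}} \;=\; \langle T^{\ast}Tx,x\rangle_{\mathcal{A}},\qquad x\in\mathcal{H},
\]
i.e.\ $T^{\ast}T \geq m\,I_{\mathcal{H}}$. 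Since $T^{\ast}T$ is positive and dominates a positive multiple of $I_{\mathcal{H}}$, it is invertible in $End_{\mathcal{A}}^{\ast}(\mathcal{H})$. The upper bound $T^{\ast}T \leq \|T\|^{2}I_{\mathcal{H}}$ is exactly Lemma \ref{l1}. For the lower bound $\|(T^{\ast}T)^{-1}\|^{-1}I_{\mathcal{H}} \leq T^{\ast}T$ I would invoke the standard $C^{\ast}$-algebra fact that a positive invertible element $S$ has spectrum contained in $[\|S^{-1}\|^{-1},\|S\|]$, and then translate this spectral inclusion into the operator inequality via continuous functional calculus.

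For (ii), start with $T$ surjective. Lemma \ref{l2} tells us that $T^{\ast}$ is bounded below in norm, which automatically makes $T^{\ast}$ injective and endows it with closed range. Part (i) applied to $T^{\ast}$ in the role of $T$ then produces the full statement for $(T^{\ast})^{\ast}T^{\ast} = TT^{\ast}$, remembering that $\|T^{\ast}\| = \|T\|$.

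The step I expect to be the main obstacle is the identity $\ran(T^{\ast}) = \ker(T)^{\perp}$ used at the start of (i): in Hilbert $C^{\ast}$-modules, orthogonal complements are not always direct summands, and this identification relies specifically on $T$ having closed range. If I wanted to avoid citing it, a clean workaround is to co-restrict $T$ to a bijection $\widetilde{T}\colon \mathcal{H}\to \ran(T)$ between Hilbert $\mathcal{A}$-modules; since $\widetilde{T}$ is surjective, Lemma \ref{l2} applied to $\widetilde{T}$ gives the inner-product lower bound for $\widetilde{T}^{\ast} = T^{\ast}|_{\ran(T)}$, from which the lower bound on $T^{\ast}T$ can be recovered with only a short computation. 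Either route reduces the proof to the two invocations of Lemmas \ref{l1} and \ref{l2} plus a spectral remark.
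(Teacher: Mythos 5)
Your proof is correct, but note that the paper itself offers no proof of this lemma: it is stated as a preliminary result imported verbatim from the cited reference of Alijani and Dehghan, so there is no in-paper argument to compare yours against. On its own terms your derivation is the standard one and it holds up: the closed range theorem for Hilbert $C^{\ast}$-modules (Lance, Theorem 3.2) gives $\ran(T^{\ast})=\ker(T)^{\perp}=\mathcal{H}$, hence $T^{\ast}$ surjective; Lemma \ref{l2} then bounds $T=(T^{\ast})^{\ast}$ below in the inner product, so $T^{\ast}T\geq m I_{\mathcal{H}}$ is positive and invertible; the sharp constants come from Lemma \ref{l1} and from $\min\sigma(S)=\|S^{-1}\|^{-1}$ for a positive invertible $S$; and (ii) follows by applying (i) to $T^{\ast}$, which is injective with closed range because it is bounded below. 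Two small points deserve a word of care. First, Lemma \ref{l1} is stated in the paper only for $T\in End_{\mathcal{A}}^{\ast}(\mathcal{H})$, while you apply it to an operator between two different modules; the extension is immediate ($\langle Tx,Tx\rangle_{\mathcal{A}}=\langle T^{\ast}Tx,x\rangle_{\mathcal{A}}\leq\|T^{\ast}T\|\langle x,x\rangle_{\mathcal{A}}$) but should be said. Second, your proposed ``workaround'' via the co-restriction $\widetilde{T}\colon\mathcal{H}\to\ran(T)$ is less of a shortcut than you suggest: Lemma \ref{l2} applied to $\widetilde{T}$ bounds $T^{\ast}$ below only on $\ran(T)$, and recovering $T^{\ast}T\geq mI_{\mathcal{H}}$ from $\langle T^{\ast}Tx,T^{\ast}Tx\rangle_{\mathcal{A}}\geq m\langle T^{\ast}Tx,x\rangle_{\mathcal{A}}$ requires a genuine spectral argument (ruling out $0$ as an isolated spectral point using injectivity), so the main route through the closed range theorem is the cleaner one to keep.
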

\begin{lemma} \label{l4} \cite{33}.
	Let $(\Omega,\mu )$ be a measure space, $X$ and $Y$ are two Banach spaces, $\lambda : X\longrightarrow Y$ be a bounded linear operator and $f : \Omega\longrightarrow X$ measurable function; then, 
	\begin{equation*}
	\lambda (\int_{\Omega}fd\mu)=\int_{\Omega}(\lambda f)d\mu.
	\end{equation*}
\end{lemma}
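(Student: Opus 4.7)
The statement is the standard fact that a bounded linear operator commutes with the Bochner integral, so the plan is to follow the usual two-step argument: verify it by hand on simple functions, then extend to arbitrary (Bochner) integrable functions by density and continuity.

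First I would unpack what $\int_{\Omega} f\, d\mu$ means here. Since $f\colon \Omega \to X$ is measurable (and we must implicitly assume Bochner-integrable, i.e.\ $\int_{\Omega} \|f\|\, d\mu < \infty$, for the integral to make sense), there exists a sequence of simple functions $f_n = \sum_{i=1}^{k_n} x_{n,i}\,\chi_{A_{n,i}}$ with $\mu(A_{n,i}) < \infty$ such that $f_n \to f$ pointwise a.e.\ and $\int_{\Omega} \|f_n - f\|\, d\mu \to 0$. The Bochner integral is then defined as the limit $\int_{\Omega} f\, d\mu = \lim_{n\to\infty} \sum_i \mu(A_{n,i})\, x_{n,i}$ in $X$.

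The first real step is to verify the identity on simple functions. For $f_n$ as above, by linearity of $\lambda$,
\begin{equation*}
\lambda\!\left(\int_{\Omega} f_n\, d\mu\right) = \lambda\!\left(\sum_i \mu(A_{n,i})\, x_{n,i}\right) = \sum_i \mu(A_{n,i})\, \lambda(x_{n,i}) = \int_{\Omega} (\lambda f_n)\, d\mu,
\end{equation*}
since $\lambda f_n = \sum_i \lambda(x_{n,i})\, \chi_{A_{n,i}}$ is itself a simple function in $Y$. This is the only place where the algebraic structure is used; the rest is analysis.

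The second step is to pass to the limit. On the left-hand side, boundedness of $\lambda$ gives $\lambda(\int_{\Omega} f_n\, d\mu) \to \lambda(\int_{\Omega} f\, d\mu)$ in $Y$ by continuity. For the right-hand side, the estimate
\begin{equation*}
\int_{\Omega} \|\lambda f_n - \lambda f\|\, d\mu \le \|\lambda\| \int_{\Omega} \|f_n - f\|\, d\mu \longrightarrow 0
\end{equation*}
shows that $\lambda f$ is itself Bochner-integrable and that $\int_{\Omega} \lambda f_n\, d\mu \to \int_{\Omega} \lambda f\, d\mu$ in $Y$. Combining with the simple-function identity and uniqueness of limits in $Y$ yields the claim.

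The only mild obstacle is bookkeeping the measurability/integrability hypothesis: the statement as written says ``measurable,'' but one really needs $f$ to be Bochner-integrable for $\int f\, d\mu$ to exist as an element of $X$; $\lambda f$ is then automatically strongly measurable (composition of measurable with continuous) and integrable via $\|\lambda\|$. No deeper obstruction arises, so the proof reduces to the approximation scheme above.
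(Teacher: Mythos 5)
Your proof is correct. Note that the paper itself offers no proof of this lemma at all --- it is quoted verbatim as a known result with a citation to Yosida's \emph{Functional Analysis} --- so there is nothing in the paper to compare your argument against. What you wrote is the standard textbook proof of the fact that a bounded linear operator commutes with the Bochner integral: verify the identity on simple functions by linearity, then pass to the limit using continuity of $\lambda$ on the left and the estimate $\int_{\Omega}\|\lambda f_n-\lambda f\|\,d\mu\le\|\lambda\|\int_{\Omega}\|f_n-f\|\,d\mu$ on the right. Your observation that the hypothesis should read ``Bochner integrable'' rather than merely ``measurable'' is also a genuine (if minor) correction to the statement as printed, since otherwise $\int_{\Omega}f\,d\mu$ need not exist.
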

\begin{theorem}\cite{Ch}\label{t0}
Let $X$ be a Banach space, $U : X \longrightarrow X$ a bounded operator and $\|I-U\|<1$. Then $U$ is invertible.
\end{theorem}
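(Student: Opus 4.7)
The plan is to construct the inverse of $U$ explicitly via the Neumann series, exploiting the geometric smallness of $V := I - U$. First I would set $V = I - U$, so that $U = I - V$ and $\|V\| < 1$ by hypothesis. The candidate inverse is the formal series
\[
S \;=\; \sum_{n=0}^{\infty} V^{n},
\]
with the convention $V^{0} = I$. My first step is to show this series converges in the Banach algebra $B(X)$ of bounded operators on $X$. Since $\|V^{n}\| \le \|V\|^{n}$ and $\sum_{n=0}^{\infty}\|V\|^{n} = (1-\|V\|)^{-1} < \infty$, the partial sums $S_{N} = \sum_{n=0}^{N} V^{n}$ form a Cauchy sequence in $B(X)$; completeness of $B(X)$ (which holds because $X$ is Banach) then yields a limit $S \in B(X)$.

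Next I would verify that $S$ is a two-sided inverse of $U$. The natural device is a telescoping identity: for each $N$,
\[
U S_{N} \;=\; (I-V)\sum_{n=0}^{N} V^{n} \;=\; \sum_{n=0}^{N} V^{n} - \sum_{n=1}^{N+1} V^{n} \;=\; I - V^{N+1},
\]
and similarly $S_{N} U = I - V^{N+1}$ since $V$ commutes with every $V^{n}$. Because $\|V^{N+1}\| \le \|V\|^{N+1} \to 0$, passing to the limit $N \to \infty$ in operator norm gives $US = SU = I$. Hence $U$ is invertible with $U^{-1} = S$.

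The argument is entirely standard and I expect no real obstacle; the only point requiring a moment of care is the completeness of $B(X)$, which relies on $X$ being Banach (the hypothesis stated), and the absolute-convergence criterion that replaces pointwise Cauchy arguments. Everything else — the convergence of the geometric majorant $\sum \|V\|^{n}$, the telescoping identity, and the norm-continuity of left and right multiplication by $U$ — is routine.
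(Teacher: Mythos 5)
Your Neumann series argument is correct and complete: the paper states this result without proof, citing \cite{Ch}, and the geometric series $\sum_{n\ge 0}(I-U)^n$ together with the telescoping identity is precisely the standard proof given in that reference. No gaps; the only hypotheses you use (completeness of the space of bounded operators on a Banach space, and $\|I-U\|<1$) are exactly those available.
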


\section{Controlled Integral Frames in Hilbert $C^{\ast}$-Modules}
Let $X$ be a Banach space, $(\Omega,\mu)$ a measure space, and $f:\Omega\to X$ be a measurable function. Integral of Banach-valued function $f$ has been defined by Bochner and others. Most properties of this integral are similar to those of the integral of real-valued functions (see \cite{32, 33}). Since every $C^{\ast}$-algebra and Hilbert $C^{\ast}$-module are Banach space, we can use this integral and its properties.

Let $(\Omega,\mu)$ be a measure spaces, $\mathcal{H}$ and $\mathcal{K}$ be two Hilbert $C^{\ast}$-modules over a unital $C^{\ast}$-algebra and $\{\mathcal{H}_{w}\}_{w\in\Omega}$  is a family of submodules of $\mathcal{H}$.  $End_{\mathcal{A}}^{\ast}(\mathcal{H},\mathcal{H}_{w})$ is the collection of all adjointable $\mathcal{A}$-linear maps from $\mathcal{H}$ into $\mathcal{H}_{w}$.

We define the following:
\begin{equation*}
l^{2}(\Omega, \{\mathcal{H}_{w}\}_{\omega \in \Omega})=\left\{x=\{x_{w}\}_{w\in\Omega}: x_{w}\in \mathcal{H}_{w}, \left\|\int_{\Omega}\langle x_{w},x_{w}\rangle_{\mathcal{A}} d\mu(w)\right\|<\infty\right\}.
\end{equation*}
For any $x=\{x_{w}\}_{w\in\Omega}$ and $y=\{y_{w}\}_{w\in\Omega}$, the $\mathcal{A}$-valued inner product is defined by $\langle x,y\rangle_{\mathcal{A}}=\int_{\Omega}\langle x_{w},y_{w}\rangle_{\mathcal{A}} d\mu(w)$ and the norm is defined by $\|x\|=\|\langle x,x\rangle_{\mathcal{A}}\|^{\frac{1}{2}}$. \\
In this case, the $l^{2}(\Omega,\{\mathcal{H}_{w}\}_{\omega \in \Omega})$ is a Hilbert $C^{\ast}$-module (see \cite{28}).
	Let $GL^{+}(\mathcal{H})$ be the set of all positive bounded linear invertible operators on $\mathcal{H}$ with bounded inverse.
\begin{definition}\cite{Ros1}
Let $\mathcal{H}$ be a Hilbert $\mathcal{A}$-module and $(\Omega , \mu)$ a measure space. A mapping $F: \Omega \longrightarrow \mathcal{H}$ is called an integral frame associted to $(\Omega , \mu)$ if: \\
\begin{itemize}
\item [$\centerdot$ ] For all $x\in \mathcal{H}$, $w \longrightarrow \langle x,F_{\omega}\rangle_{\mathcal{A}} $ is measurable function on $\Omega$.
\item [$\centerdot$ ] There is a pair of constants $0<A, B$ such that,
\begin{equation}
A\langle x,x\rangle_{\mathcal{A}} \leq\int_{\Omega}\langle x,F_{\omega}\rangle_{\mathcal{A}} \langle F_{\omega},x\rangle_{\mathcal{A}} d\mu(w)\leq B\langle x,x\rangle_{\mathcal{A}}, \quad  x\in \mathcal{H}.
\end{equation}
\end{itemize}
\end{definition}
\begin{definition}\cite{Ros1}
	Let $\mathcal{H}$ be a Hilbert $\mathcal{A}$-module and $(\Omega , \mu)$ a measure space. A mapping $F: \Omega \longrightarrow \mathcal{H}$ is called a $\ast$-integral frame associted to $(\Omega , \mu)$ if: \\
	\begin{itemize}
		\item [$\centerdot$ ] For all $x\in \mathcal{H}$, $w \longrightarrow \langle x,F_{\omega}\rangle_{\mathcal{A}} $ is measurable function on $\Omega$,
		\item [$\centerdot$ ] there exist two non-zero elements $A$, $B$ in $\mathcal{A}$ such that,
		\begin{equation}
		A\langle x,x\rangle_{\mathcal{A}} A^{\ast} \leq\int_{\Omega}\langle x,F_{\omega}\rangle_{\mathcal{A}} \langle F_{\omega},x\rangle_{\mathcal{A}} d\mu(w)\leq B\langle x,x\rangle_{\mathcal{A}} B^{\ast}, \quad  x\in \mathcal{H}.
		\end{equation}
	\end{itemize}
\end{definition}

\section{Main Results}

\begin{definition}
Let $\mathcal{H}$ be a Hilbert $\mathcal{A}$-module and $(\Omega , \mu)$ a measure space. A $C$-controlled integral frame in $C^{\ast}$-module $\mathcal{H}$ is a map $F: \Omega \longrightarrow \mathcal{H}$ such that there exist $0<A\leq B<\infty$ such that,
\begin{equation}\label{eqd1}
A\langle x,x\rangle_{\mathcal{A}} \leq\int_{\Omega}\langle x,F_{\omega}\rangle_{\mathcal{A}} \langle C F_{\omega},x\rangle_{\mathcal{A}} d\mu(w)\leq B\langle x,x\rangle_{\mathcal{A}}, \quad  x\in \mathcal{H}.
\end{equation}
The elements $A$ and $B$ are called the $C$-controlled integral frame bounds.\\
If $A=B$, we call this a $C$-controlled integral tight frame.\\
If $A=B=1$, it's called a $C$-controlled integral parseval frame.\\
If only the right hand inequality of \eqref{eqd1} is satisfied, we call $F$ a $C$-controlled integral Bessel mapping with bound $B$.
\end{definition}
\begin{example}
	Let $\mathcal{H}=\left\{ X=\left( 
	\begin{array}{ccc}
	a & 0 & 0 \\ 
	0 & 0 & b%
	\end{array}%
	\right) \text{ / }a,b\in 
	\mathbb{C}
	\right\} $, \\
	and $\mathcal{A=}\left\{ \left( 
	\begin{array}{cc}
	x & 0 \\ 
	0 & y%
	\end{array}%
	\right) \text{ / }x,y\in 
	\mathbb{C}
	\right\} $ which is a $C^{\ast}$-algebra.\\
	 We define the inner product :
	
	\[
	\begin{array}{ccc}
	\mathcal{H}\times \mathcal{H} & \rightarrow  & \mathcal{A} \\ 
	(A,B) & \mapsto  & A(\overline{B})^{t}%
	\end{array}%
	\]\\
	This inner product makes $\mathcal{H}$ a $C^{\ast}$-module over  $\mathcal{A}$.\\
	Let $C$ be an operator defined by,
	
	\begin{align*}
	C: \mathcal{H}   &\longrightarrow    \mathcal{H} \\
	X&\longrightarrow \alpha X
	\end{align*}
	where $\alpha$ is a reel number strictly greater than zero.\\
	It's clair that  $	C \in Gl^{+}(\mathcal{H})$.\\
	Let $\Omega = [0,1]$ endewed with the lebesgue's measure. It's clear that a measure space.\\
	We consider :
	\begin{align*}
	F:\qquad [0,1] &\longrightarrow \mathcal{H}\\
	w&\longrightarrow F_{w}=\left( 
	\begin{array}{cccc}
	w & 0 & 0 \\ 
	0 & 0 & \frac{w}{2}%
	\end{array}%
	\right).
	\end{align*}

	In addition, for $X\in \mathcal{H}$, we have,
	\begin{align*}
	\int_{\Omega}\langle X,F_{w}\rangle_{\mathcal{A}} \langle CF_{w},X\rangle_{\mathcal{A}} {\mathcal{A}}d\mu(\omega)&=\int_{\Omega}\alpha w^{2}\left( 
	\begin{array}{cccc}
	|a|^{2} & 0 \\ 
	0 & \frac{|b|^{2}}{4}%
	\end{array}%
	\right)d\mu(\omega)\\
	&=\frac{\alpha}{3}\left( 
	\begin{array}{cccc}
	|a|^{2} & 0 \\ 
	0 & \frac{|d|^{2}}{4}%
	\end{array}%
	\right).
	\end{align*}
	It's clear that,
	\begin{align*}
	\frac{1}{4}\|X\|_{\mathcal{A}}^{2}\leq 
	\left( 
	\begin{array}{cccc}
	|a|^{2} & 0  \\ 
	0 & \frac{|b|^{2}}{4} %
	\end{array}%
	\right)\leq 
	\left( 
	\begin{array}{cccc}
	|a|^{2} & 0  \\ 
	0 & |b|^{2} %
	\end{array}%
	\right)=\|X\|_{\mathcal{A}}^{2}.
	\end{align*}
	Then we have
	\begin{equation*}
	\frac{\alpha}{12}\|X\|_{\mathcal{A}}^{2}\leq 	\int_{\Omega}\langle X,F_{w}\rangle_{\mathcal{A}} \langle CF_{w},X\rangle_{\mathcal{A}}d\mu(\omega)\leq \frac{\alpha}{3}\|X\|_{\mathcal{A}}^{2}.
	\end{equation*}
	Which show that $F$ is a $C$-controlled integral frame for the $C^{\ast}$-module $\mathcal{H}.$ 
	\end{example}

\begin{definition}
	Let $F$ be a $C$-controlled integral frame for $\mathcal{H}$ associted to $(\Omega , \mu)$. We define the frame operator  $S_{C} : \mathcal{H} \longrightarrow \mathcal{H}$ for $F$ by,
	\begin{equation*}
	S_{C}x=\int_{\Omega}\langle x,F_{\omega}\rangle_{\mathcal{A}} CF_{\omega}  d\mu(\omega), \quad x\in \mathcal{H}.
	 \end{equation*}
\end{definition}
\begin{proposition}
The frame operator $S_{C}$ is positive, selfadjoint, bounded and invertible.
\end{proposition}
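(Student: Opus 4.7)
The plan is to reduce every assertion to a property of the $\mathcal{A}$-valued form $\langle S_{C}x,x\rangle_{\mathcal{A}}$ and then apply the frame inequality together with Theorem~\ref{t0}. The central bridging identity is
\begin{equation*}
\langle S_{C}x,y\rangle_{\mathcal{A}}
=\int_{\Omega}\langle x,F_{\omega}\rangle_{\mathcal{A}}\langle CF_{\omega},y\rangle_{\mathcal{A}}\,d\mu(\omega),
\end{equation*}
which I would obtain by applying Lemma~\ref{l4} to the bounded $\mathcal{A}$-linear functional $z\mapsto\langle z,y\rangle_{\mathcal{A}}$ and the Bochner integrable map $\omega\mapsto\langle x,F_{\omega}\rangle_{\mathcal{A}}CF_{\omega}$. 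Setting $y=x$ yields
\begin{equation*}
A\langle x,x\rangle_{\mathcal{A}}\le\langle S_{C}x,x\rangle_{\mathcal{A}}\le B\langle x,x\rangle_{\mathcal{A}}.
\end{equation*}

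For \textbf{positivity and selfadjointness} I would argue as follows. Because $A>0$, the left-hand estimate gives $\langle S_{C}x,x\rangle_{\mathcal{A}}\ge0$ in $\mathcal{A}$ for every $x\in\mathcal{H}$, so $S_{C}$ is a positive element of $End_{\mathcal{A}}^{\ast}(\mathcal{H})$. Since any positive adjointable operator on a Hilbert $C^{\ast}$-module is automatically selfadjoint (the inequality forces $\langle S_{C}x,x\rangle_{\mathcal{A}}=\langle S_{C}x,x\rangle_{\mathcal{A}}^{\ast}$, and a polarization argument then yields $S_{C}=S_{C}^{\ast}$), selfadjointness follows at once. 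This simultaneously sidesteps the obstacle that $S_{C}=CS$ with $C,S$ not obviously commuting.

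For \textbf{boundedness} I would take the positive square root $S_{C}^{1/2}\in End_{\mathcal{A}}^{\ast}(\mathcal{H})$ and note
\begin{equation*}
\|S_{C}^{1/2}x\|^{2}=\|\langle S_{C}^{1/2}x,S_{C}^{1/2}x\rangle_{\mathcal{A}}\|
=\|\langle S_{C}x,x\rangle_{\mathcal{A}}\|\le B\|x\|^{2},
\end{equation*}
so $\|S_{C}\|=\|S_{C}^{1/2}\|^{2}\le B$.

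For \textbf{invertibility}, I would exploit the two-sided bound $A\,I_{\mathcal{H}}\le S_{C}\le B\,I_{\mathcal{H}}$, which rescaling gives $0\le I_{\mathcal{H}}-B^{-1}S_{C}\le(1-A/B)I_{\mathcal{H}}$. Applying the previous boundedness argument to the positive operator $I_{\mathcal{H}}-B^{-1}S_{C}$ yields
\begin{equation*}
\|I_{\mathcal{H}}-B^{-1}S_{C}\|\le 1-\tfrac{A}{B}<1,
\end{equation*}
so Theorem~\ref{t0} makes $B^{-1}S_{C}$, and hence $S_{C}$, invertible. The main obstacle I anticipate is cleanly justifying the interchange in Step~1 (measurability/Bochner integrability of the integrand) and carefully invoking the ``positive $\Rightarrow$ selfadjoint'' fact in the $C^{\ast}$-module setting; after that every remaining step is a direct calculation.
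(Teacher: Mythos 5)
Your proposal is correct and follows essentially the same route as the paper: derive $A\langle x,x\rangle_{\mathcal{A}}\le\langle S_{C}x,x\rangle_{\mathcal{A}}\le B\langle x,x\rangle_{\mathcal{A}}$ via Lemma~\ref{l4}, read off positivity (hence selfadjointness) and boundedness from $A\,I\le S_{C}\le B\,I$, and conclude invertibility from $\|I-B^{-1}S_{C}\|\le\frac{B-A}{B}<1$ together with Theorem~\ref{t0}. The extra care you take with the square-root argument for the norm bound and the polarization remark only fills in details the paper leaves implicit.
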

\begin{proof} For all $x\in \mathcal{H}$, by lemma \eqref{l4}, we have,
\begin{equation*}
\langle S_{C}x,x\rangle_{\mathcal{A}} = \langle \int_{\Omega}\langle x,F_{\omega}\rangle_{\mathcal{A}} CF_{\omega}  d\mu(\omega),x\rangle_{\mathcal{A}}=\int_{\Omega}\langle x,F_{\omega}\rangle_{\mathcal{A}} \langle CF_{\omega}  ,x\rangle_{\mathcal{A}} d\mu(\omega).
\end{equation*}
By left hand of inequality \eqref{eqd1}, we have,
\begin{equation*}
0\leq A\langle x,x\rangle_{\mathcal{A}} \leq \langle S_{C}x,x\rangle_{\mathcal{A}}.
\end{equation*}
Then $S_{C}$ is a positive operator, also, it's sefladjoint.\\
From \eqref{eqd1}, we have,
\begin{equation*}
A\langle x,x\rangle_{\mathcal{A}} \leq\langle S_{C}x,x\rangle_{\mathcal{A}}\leq B\langle x,x\rangle_{\mathcal{A}}, \quad  x\in \mathcal{H}.
\end{equation*}
So,
\begin{equation*}
A.I \leq S_{C}\leq  B.I
\end{equation*}
Then $S_{C}$ is a bounded operator.\\
Moreover, 
\begin{equation*}
0 \leq I-B^{-1}S_{C} \leq \frac{B-A}{B}.I,
\end{equation*} 
Consequently,
\begin{equation*}
\|I-B^{-1}S_{C} \|=\underset{x \in \mathcal{H}, \|x\|=1}{\sup}\|\langle(I-B^{-1}S_{C})x,x\rangle_{\mathcal{A}} \|\leq \frac{B-A}{B}<1.
\end{equation*}
The Theorem \ref{t0} shows that  $S_{C}$ is invertible.
\end{proof}
\begin{corollary}
	Let $\mathcal{H}$ be a Hilbert $\mathcal{A}$-module and $(\Omega,\mu)$ be a measure space. Let $F : \Omega \longrightarrow \mathcal{H}$ be a mapping. Assume that $S$ is the frame operator for $F$. Then the following statements are equivalent :
	\begin{itemize}
		\item [$(1)$]$F$ is an integral frame  associted to $(\Omega, \mu)$ with integral frame bounds $A$ and $B$.
		\item [$(2)$] We have $A.I \leq S \leq B.I$
	\end{itemize}
\end{corollary}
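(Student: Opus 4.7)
The plan is to mimic the argument used in the preceding proposition, specialized to the case $C = I$. Everything reduces to the single identity
$$\langle Sx, x\rangle_{\mathcal{A}} = \int_{\Omega} \langle x, F_\omega\rangle_{\mathcal{A}} \langle F_\omega, x\rangle_{\mathcal{A}} \, d\mu(\omega), \qquad x \in \mathcal{H},$$
after which the two implications follow by reading the same chain of inequalities in opposite directions.

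First I would derive this identity. Applying Lemma \ref{l4} to the bounded $\mathcal{A}$-linear map $y \mapsto \langle y, x\rangle_{\mathcal{A}}$ from $\mathcal{H}$ to $\mathcal{A}$ and to the Bochner-integrable function $\omega \mapsto \langle x, F_\omega\rangle_{\mathcal{A}} F_\omega$ allows the pairing with $x$ to be pulled inside the integral defining $Sx$, which yields the displayed formula after using the module property of the inner product in the first argument.

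For (1) $\Rightarrow$ (2), the integral frame inequality in (1) becomes, via the identity, precisely $A\langle x,x\rangle_{\mathcal{A}} \leq \langle Sx, x\rangle_{\mathcal{A}} \leq B \langle x,x\rangle_{\mathcal{A}}$ for every $x \in \mathcal{H}$, which is the operator inequality $A \cdot I \leq S \leq B \cdot I$ in the usual order on self-adjoint elements of $End_{\mathcal{A}}^{\ast}(\mathcal{H})$. Conversely, for (2) $\Rightarrow$ (1), pairing $A \cdot I \leq S \leq B \cdot I$ with an arbitrary $x$ and substituting the identity recovers the defining inequality of an integral frame. Measurability of $\omega \mapsto \langle x, F_\omega\rangle_{\mathcal{A}}$ is already built into the hypothesis that $S$ is the frame operator of $F$, so nothing extra is required on that side.

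The only delicate point, which is really bookkeeping rather than a genuine obstacle, is the invocation of Lemma \ref{l4}: one has to verify that the integrand is Bochner integrable in the Hilbert module sense, but this is automatic from the assumption that $Sx$ is well defined as an element of $\mathcal{H}$. Beyond that, the proof is a direct translation between a scalar-valued (i.e., $\mathcal{A}$-valued) inequality and the corresponding operator inequality.
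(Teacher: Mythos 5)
Your proof is correct, and the forward direction $(1)\Rightarrow(2)$ coincides with the paper's: both hinge on pulling the pairing with $x$ through the Bochner integral (Lemma \ref{l4}) to get $\langle Sx,x\rangle_{\mathcal{A}}=\int_{\Omega}\langle x,F_{\omega}\rangle_{\mathcal{A}}\langle F_{\omega},x\rangle_{\mathcal{A}}\,d\mu(\omega)$ and then reading the frame inequality as the operator order $A\cdot I\leq S\leq B\cdot I$. The converse direction, however, is where you genuinely diverge. The paper does not simply reverse the identity: it passes to operator norms, estimating $\|\langle Sx,x\rangle_{\mathcal{A}}\|\leq\|Sx\|\,\|x\|\leq B\|x\|^{2}$ and $\|\langle Sx,x\rangle_{\mathcal{A}}\|\geq A\|x\|^{2}$, and concludes with the scalar inequality $A\|x\|^{2}\leq\|\int_{\Omega}\langle x,F_{\omega}\rangle_{\mathcal{A}}\langle F_{\omega},x\rangle_{\mathcal{A}}\,d\mu(\omega)\|\leq B\|x\|^{2}$ --- which is weaker than the $\mathcal{A}$-valued frame inequality and, strictly speaking, requires the subsequent theorem (characterizing integral frames via the norm inequality, with possibly different constants) to close the loop. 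Your route is more direct and arguably tighter: the operator inequality $A\cdot I\leq S\leq B\cdot I$ means $S-A\cdot I\geq 0$ and $B\cdot I-S\geq 0$ in $End_{\mathcal{A}}^{\ast}(\mathcal{H})$, and positivity of an adjointable operator $T$ is equivalent to $\langle Tx,x\rangle_{\mathcal{A}}\geq 0$ in $\mathcal{A}$ for every $x$, so you recover the defining inequality verbatim with the \emph{same} bounds $A$ and $B$. The one point worth making explicit if you write this up is that equivalence of operator positivity with positivity of all the localized inner products $\langle Tx,x\rangle_{\mathcal{A}}$ is a nontrivial (though standard) fact about Hilbert $C^{\ast}$-modules, not a tautology; citing it would make your argument fully self-contained.
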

\begin{proof}
$(1) \Longrightarrow(2)$ Let $F$ be an integral frame associted to $(\Omega, \mu)$ with integral frames bounds $A$ and $B$, then, 
\begin{equation*}
A\langle x,x\rangle_{\mathcal{A}} \leq\int_{\Omega}\langle x,F_{\omega}\rangle_{\mathcal{A}} \langle F_{\omega},x\rangle_{\mathcal{A}} d\mu(w)\leq B\langle x,x\rangle_{\mathcal{A}}, \quad  x\in \mathcal{H}.
\end{equation*}
Since, 
\begin{equation}
 Sx=\int_{\Omega}\langle x,F_{\omega}\rangle_{\mathcal{A}} F_{\omega}d\mu(\omega).
 \end{equation}
 We have,
\begin{equation*}
\langle Sx,x\rangle_{\mathcal{A}} = \langle \int_{\Omega}\langle x,F_{\omega}\rangle_{\mathcal{A}} F_{\omega}d\mu(\omega),x\rangle_{\mathcal{A}} = \int_{\Omega}\langle x,F_{\omega}\rangle_{\mathcal{A}} \langle F_{\omega},x\rangle_{\mathcal{A}} d\mu(\omega),
\end{equation*}
then
\begin{equation*}
\langle Ax,x\rangle_{\mathcal{A}} \leq\langle Sx,x\rangle_{\mathcal{A}} \leq \langle Bx,x\rangle_{\mathcal{A}}, \quad  x\in \mathcal{H}.
\end{equation*}
So,
\begin{equation*}
A.I \leq S \leq B.I.
\end{equation*}
$(2) \Longrightarrow(1)$ 
Let $x\in \mathcal{H}$, then,
\begin{equation}\label {p1}
\|\int_{\Omega}\langle x,F_{\omega}\rangle_{\mathcal{A}} \langle F_{\omega},x\rangle_{\mathcal{A}} d\mu(w)\|=\|\langle S_{C}x,x\rangle_{\mathcal{A}} \|\leq \|S_{C}x\|\|x\|\leq B\|x\|^{2}
\end{equation}
Also,
\begin{equation}\label{p2}
\|\langle S_{C}x,x\rangle_{\mathcal{A}} \|\geq \|\langle Ax,x\rangle_{\mathcal{A}}\| \geq A\|x\|^{2}
\end{equation}
By \eqref{p1} and \eqref{p2} we obtain
\begin{equation*}
A\|x\|^{2} \leq \|\int_{\Omega}\langle x,F_{\omega}\rangle_{\mathcal{A}} \langle F_{\omega},x\rangle_{\mathcal{A}} d\mu(w)\| \leq B\|x\|^{2}
\end{equation*}
Which ends the proof.
\end{proof}
\begin{theorem}
	Let $\mathcal{H}$ be a Hilbert $\mathcal{A}$-module, $(\Omega , \mu)$ be a measure space and let $F$ be a mapping over $\Omega$ to $\mathcal{H}$, then $F$ is an integral frame associted to $(\Omega , \mu)$ if and only if there exist $0<A\leq B<\infty$ such that,
	\begin{equation}\label{eq01t1}
	A\|x\|^{2} \leq\|\int_{\Omega}\langle x,F_{\omega}\rangle_{\mathcal{A}} \langle F_{\omega},x\rangle_{\mathcal{A}} d\mu(w)\|^{2}\leq B\|x\|^{2} \quad  x\in \mathcal{H}.
	\end{equation}
\end{theorem}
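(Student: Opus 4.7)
The plan is to handle the two implications by translating between the $\mathcal{A}$-valued order inequality in the definition of an integral frame and the scalar norm inequality~\eqref{eq01t1}. The forward direction will be essentially immediate from monotonicity of the $C^{\ast}$-norm on the positive cone $\mathcal{A}^{+}$; the reverse direction is the substantive part and requires reconstructing an operator inequality on the frame operator $S$ and then invoking Lemmas~\ref{l1}--\ref{l3}. Note that as written, the outer terms of~\eqref{eq01t1} are quadratic in $\|x\|$ while the middle term is quartic, so I will treat this as a scaling quirk and absorb the extra power into the constants when moving between the two forms.

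For the direction $\Rightarrow$, I start from $A\langle x,x\rangle_{\mathcal{A}}\le \int_{\Omega}\langle x,F_{\omega}\rangle_{\mathcal{A}}\langle F_{\omega},x\rangle_{\mathcal{A}}\,d\mu(\omega)\le B\langle x,x\rangle_{\mathcal{A}}$ and apply $\|\cdot\|$ termwise. All three elements are positive and the $C^{\ast}$-norm is monotone on $\mathcal{A}^{+}$, so using $\|\langle x,x\rangle_{\mathcal{A}}\|=\|x\|^{2}$ I immediately obtain $A\|x\|^{2}\le \|\int_{\Omega}\langle x,F_{\omega}\rangle_{\mathcal{A}}\langle F_{\omega},x\rangle_{\mathcal{A}}\,d\mu(\omega)\|\le B\|x\|^{2}$, which is~\eqref{eq01t1} up to the scaling remark above.

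For the direction $\Leftarrow$, I introduce the frame operator $Sx=\int_{\Omega}\langle x,F_{\omega}\rangle_{\mathcal{A}}F_{\omega}\,d\mu(\omega)$ as in the preceding corollary. Lemma~\ref{l4} gives $\langle Sx,x\rangle_{\mathcal{A}}=\int_{\Omega}\langle x,F_{\omega}\rangle_{\mathcal{A}}\langle F_{\omega},x\rangle_{\mathcal{A}}\,d\mu(\omega)$, which is positive; together with the upper half of~\eqref{eq01t1} and the Cauchy--Schwarz-type estimate $\|\langle Sx,x\rangle_{\mathcal{A}}\|\le\|Sx\|\|x\|$, this certifies that $S$ is a bounded, positive, self-adjoint operator on $\mathcal{H}$, and Lemma~\ref{l1} supplies the upper $\mathcal{A}$-valued estimate $\langle Sx,x\rangle_{\mathcal{A}}\le \|S\|\langle x,x\rangle_{\mathcal{A}}$. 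For the lower bound, the lower half of~\eqref{eq01t1} combined with the same Cauchy--Schwarz inequality yields $\|Sx\|\ge A'\|x\|$ for some $A'>0$; self-adjointness of $S$ and Lemma~\ref{l2} then force $S$ to be surjective with closed range, so Lemma~\ref{l3}(ii) makes $S$ invertible. Positivity of $S$ together with invertibility gives $\|S^{-1}\|^{-1}I_{\mathcal{H}}\le S$, which paired against $x$ reproduces the required lower $\mathcal{A}$-valued frame bound.

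The main obstacle is this last step: passing from a scalar lower bound on $\|\langle Sx,x\rangle_{\mathcal{A}}\|$ back to an order-theoretic lower bound $A'\langle x,x\rangle_{\mathcal{A}}\le\langle Sx,x\rangle_{\mathcal{A}}$ in $\mathcal{A}^{+}$ is not automatic in the Hilbert $C^{\ast}$-module setting, and routing through the invertibility of $S$ as above is essentially unavoidable. The price is that the final constants come out as $\|S^{-1}\|^{-1}$ and $\|S\|$ rather than the original $A,B$, so the equivalence must be read up to constants---a reading that is already suggested by the mismatch in powers of $\|x\|$ visible in~\eqref{eq01t1}.
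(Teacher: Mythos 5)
Your proof is correct and follows essentially the same route as the paper: both directions hinge on the frame operator $S$ and on Lemma \ref{l2} to convert the scalar lower bound on $\|\langle Sx,x\rangle_{\mathcal{A}}\|$ into an order-theoretic lower bound in $\mathcal{A}^{+}$ (the paper applies Lemma \ref{l2} to $S^{\frac{1}{2}}$, while you apply it to $S$ itself and then pass through invertibility and positivity, which is the same idea), with Lemma \ref{l1} supplying the upper bound in both cases. Your observation about the mismatched powers of $\|x\|$ in \eqref{eq01t1} is well taken --- the paper's own proof silently works with the unsquared middle term --- though strictly speaking a homogeneity mismatch cannot be ``absorbed into the constants''; the square is simply a typo and the inequality must be read without it, exactly as your argument in fact does.
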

\begin{proof}
Let $F$ be an integral frame associted to $(\Omega , \mu)$ with bounds $A$ and $B$, then,
 \begin{equation}
 A\langle x,x\rangle_{\mathcal{A}} \leq\int_{\Omega}\langle x,F_{\omega}\rangle_{\mathcal{A}} \langle F_{\omega},x\rangle_{\mathcal{A}} d\mu(w)\leq B\langle x,x\rangle_{\mathcal{A}}, \quad  x\in \mathcal{H}.
 \end{equation}
 Since the lower and upper bounds are positive then we have,
 \begin{equation*}
 A\|x\|^{2} \leq\|\int_{\Omega}\langle x,F_{\omega}\rangle_{\mathcal{A}} \langle F_{\omega},x\rangle_{\mathcal{A}} d\mu(w)\|^{2}\leq B\|x\|^{2} \quad  x\in \mathcal{H}.
 \end{equation*}
 Conversely, suppose \eqref{eq01t1} holds.
By (\cite{Ros1}, Theorem 2.4), we have, 
 \begin{equation*}
 \|\int_{\Omega}\langle x,F_{\omega}\rangle_{\mathcal{A}} \langle F_{\omega},x\rangle_{\mathcal{A}} d\mu(w)\|^{2}=\|\langle Sx,x\rangle_{\mathcal{A}}\|
 =\|S^{\frac{1}{2}}x,S^{\frac{1}{2}}x\|=\|S^{\frac{1}{2}}x\|^{2}
 \end{equation*}
 Then,
 \begin{equation*}
 A\|x\|^{2} \leq\|S^{\frac{1}{2}}x\|^{2}\leq B\|x\|^{2} \quad  x\in \mathcal{H}.
 \end{equation*}
 By lemma \ref{l2}, there exists $0<m,M$ such that,
 \begin{equation}
 m\langle x,x\rangle_{\mathcal{A}} \leq\int_{\Omega}\langle x,F_{\omega}\rangle_{\mathcal{A}} \langle F_{\omega},x\rangle_{\mathcal{A}} d\mu(w)\leq M\langle x,x\rangle_{\mathcal{A}}, \quad  x\in \mathcal{H}.
 \end{equation}
 which ends the proof. 
\end{proof}

\begin{theorem}
Let $\mathcal{H}$ be a Hilbert $\mathcal{A}$-module, $C\in GL^{+}(\mathcal{H})$ and $(\Omega , \mu)$ a measure space and $F$ be a mapping for $\Omega$ to $\mathcal{H}$. Then $F$ is a $C$-controlled integral frame for $\mathcal{H}$ associted to $(\Omega , \mu)$ if and only if there exist $0<A\leq B<\infty$ such that,
\begin{equation}\label{eq1t1}
A\|x\|^{2} \leq\|\int_{\Omega}\langle x,F_{\omega}\rangle_{\mathcal{A}} \langle C F_{\omega},x\rangle_{\mathcal{A}} d\mu(w)\|^{2}\leq B\|x\|^{2} \quad  x\in \mathcal{H}.
\end{equation}
\end{theorem}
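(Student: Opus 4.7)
The plan is to mirror the argument of the preceding uncontrolled theorem, inserting $C$ where appropriate and leaning on the preceding proposition, which establishes that the $C$-controlled frame operator $S_C$ is positive, selfadjoint, bounded, and invertible.

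For the forward implication, assume $F$ satisfies \eqref{eqd1}. Each term in the chain is a positive element of $\mathcal{A}$: the outer terms manifestly, and the middle term because it equals $\langle S_{C}x,x\rangle_{\mathcal{A}}$ with $S_C$ positive (preceding proposition). Since the $C^{\ast}$-norm is order-preserving on $\mathcal{A}^{+}$, passing to norms in \eqref{eqd1} yields
\begin{equation*}
A\|x\|^{2} \leq \left\|\int_{\Omega}\langle x,F_{\omega}\rangle_{\mathcal{A}} \langle C F_{\omega},x\rangle_{\mathcal{A}}\, d\mu(w)\right\| \leq B\|x\|^{2},
\end{equation*}
and squaring (after relabeling $A^{2}\mapsto A$, $B^{2}\mapsto B$) delivers \eqref{eq1t1}.

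For the converse, assume \eqref{eq1t1}. I would set $S_{C}x:=\int_{\Omega}\langle x,F_{\omega}\rangle_{\mathcal{A}} CF_{\omega}\, d\mu(\omega)$ and use Lemma \ref{l4} to rewrite the middle of \eqref{eq1t1} as $\|\langle S_{C}x,x\rangle_{\mathcal{A}}\|^{2}$. Invoking the identity $\|\langle S_{C}x,x\rangle_{\mathcal{A}}\|=\|S_{C}^{1/2}x\|^{2}$ (as in the uncontrolled theorem via \cite{Ros1}), the hypothesis becomes a two-sided norm control on $S_{C}^{1/2}$. Since $S_{C}^{1/2}$ is selfadjoint, Lemma \ref{l2} upgrades this to an inner-product estimate
\begin{equation*}
m\langle x,x\rangle_{\mathcal{A}} \leq \langle S_{C}^{1/2}x,S_{C}^{1/2}x\rangle_{\mathcal{A}} = \langle S_{C}x,x\rangle_{\mathcal{A}} \leq M\langle x,x\rangle_{\mathcal{A}}
\end{equation*}
for some $0<m\leq M<\infty$, where the upper bound is either similarly derived or obtained from the boundedness of $S_{C}$. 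Expanding $\langle S_{C}x,x\rangle_{\mathcal{A}}$ via Lemma \ref{l4} recovers \eqref{eqd1} with the constants $m$, $M$.

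I expect the converse to be the main obstacle: one must convert a scalar $\mathcal{A}$-norm inequality back into an $\mathcal{A}$-valued operator inequality. The decisive tool is Lemma \ref{l2}, which promotes norm-boundedness-from-below of a selfadjoint operator to inner-product-boundedness-from-below; applying it to $S_{C}^{1/2}$ is the only nonroutine step. A subordinate subtlety is ensuring that $S_{C}$ is selfadjoint and positive so that $S_{C}^{1/2}$ is well defined under the converse hypothesis alone; this is handled exactly as in the uncontrolled analogue, by appealing to the positivity of the quadratic form $\langle S_{C}x,x\rangle_{\mathcal{A}}$ that comes from $C\in GL^{+}(\mathcal{H})$ together with the structure of the integrand.
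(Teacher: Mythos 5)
Your proposal is correct and follows essentially the same route as the paper: the converse is handled by identifying the middle term with $\|\langle S_{C}x,x\rangle_{\mathcal{A}}\|=\|S_{C}^{1/2}x\|^{2}$ and then applying Lemma \ref{l2} to the selfadjoint operator $S_{C}^{1/2}$ to upgrade the norm bound to an $\mathcal{A}$-valued inner-product bound. If anything you are slightly more careful than the paper, which cites Lemma \ref{l2} for the upper estimate as well, whereas (as you note) that direction really comes from boundedness of $S_{C}^{1/2}$, i.e.\ Lemma \ref{l1}.
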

\begin{proof}
$\Longrightarrow$) obvious.\\
$\Longleftarrow$) Supposes there exists $0<A\leq B<\infty$, such that \eqref{eq1t1} holds.\\
On one hand, for all $x\in \mathcal{H}$ we have ,
\begin{align*}
A\|x\|^{2} &\leq\|\int_{\Omega}\langle x,F_{\omega}\rangle_{\mathcal{A}} \langle C F_{\omega},x\rangle_{\mathcal{A}} d\mu (\omega)\|\\
&= \|\langle S_{C}x,x\rangle_{\mathcal{A}}\|\\
&= \|\langle S_{C}^{\frac{1}{2}}x,S_{C}^{\frac{1}{2}}x\rangle_{\mathcal{A}}\|\\
&= \|S_{C}^{\frac{1}{2}}x\|^{2}.
\end{align*}
By lemma \ref{l2}, there exist $0< m$ such that,
\begin{equation}\label{tt1}
m \langle x,x\rangle_{\mathcal{A}} \leq \langle S_{C}^{\frac{1}{2}}x,S_{C}^{\frac{1}{2}}x\rangle_{\mathcal{A}}=\langle S_{C}x,x\rangle_{\mathcal{A}}. 
\end{equation}
On other hand, for all $x\in \mathcal{H}$ we have ,
\begin{align*}
B\|x\|^{2}&\geq \|\int_{\Omega}\langle x,F_{\omega}\rangle_{\mathcal{A}} \langle C F_{\omega},x\rangle_{\mathcal{A}} d\mu(w)\|^{2}\\
&= \|\langle S_{C}x,x\rangle_{\mathcal{A}}\|\\
&= \|\langle S_{C}^{\frac{1}{2}}x,S_{C}^{\frac{1}{2}}x\rangle_{\mathcal{A}}\|\\
&= \|S_{C}^{\frac{1}{2}}x\|^{2}.
\end{align*}
By lemma \ref{l2}, there exist $0< m^{'}$ such that,
\begin{equation}\label{tt2}
\langle S_{C}^{\frac{1}{2}}x,S_{C}^{\frac{1}{2}}x\rangle_{\mathcal{A}}=\langle S_{C}x,x\rangle_{\mathcal{A}} \leq m^{'}\langle x,x\rangle_{\mathcal{A}}. 
\end{equation}
From \eqref{tt1} and \eqref{tt2}, we conclude that $F$ is a $C$-controlled integral frame.
\end{proof}
\begin{proposition}
Let $C\in GL^{+}(\mathcal{H})$ and $F$ be a $C$-controlled integral frame for $\mathcal{H}$ associted to $(\Omega , \mu)$ with bounds $A$ and $B$. Then $F$ is an integral frame for $\mathcal{H}$ associted to $(\Omega , \mu)$ with bounds $A\|C^{\frac{1}{2}}\|^{-2}$ and $B\|C^{\frac{-1}{2}}\|^{2}$.
\end{proposition}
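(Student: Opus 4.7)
The plan is to express the proposition in terms of frame operators and reduce it to an operator inequality that we can manipulate using the positivity and invertibility of $C$.

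First, I would rewrite the $C$-controlled frame condition as $AI\le S_C\le BI$, where $S_C$ is the operator defined in the proposition preceding this one. Next, I would relate $S_C$ to the candidate ``ordinary'' integral frame operator $Sx=\int_\Omega\langle x,F_\omega\rangle_{\mathcal{A}}F_\omega\,d\mu(\omega)$. Using Lemma \ref{l4} to pull the bounded adjointable operator $C$ out of the integral, one obtains $S_C=CS$. Since $S_C$ is self-adjoint by the previous proposition, and both $C$ and (formally) $S$ are self-adjoint, the identity $CS=(CS)^{\ast}=SC$ forces $C$ and $S$ to commute. Commutativity then upgrades to $S_C=C^{1/2}SC^{1/2}$ via the continuous functional calculus applied to $C$.

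Along the way I would verify that $S$ is actually a well-defined bounded adjointable operator (this is the first small obstacle). Since $S_C$ is bounded and $C\in GL^+(\mathcal{H})$, one can write $S=C^{-1}S_C$, which is bounded; applying Lemma \ref{l4} with $\lambda=C^{-1}$ to $S_Cx$ shows that this agrees with $\int_\Omega\langle x,F_\omega\rangle_{\mathcal{A}}F_\omega\,d\mu(\omega)$, so the integral converges and equals the desired operator.

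Now with $S_C=C^{1/2}SC^{1/2}$ and $AI\le S_C\le BI$, I conjugate by $C^{-1/2}\in GL^+(\mathcal{H})$ to obtain the double inequality
\begin{equation*}
AC^{-1}\le S\le BC^{-1}.
\end{equation*}
For the upper bound, $C^{-1}\le\|C^{-1}\|I=\|C^{-1/2}\|^{2}I$, giving $S\le B\|C^{-1/2}\|^{2}I$. For the lower bound, $C\le\|C\|I=\|C^{1/2}\|^{2}I$ yields $C^{-1}\ge\|C^{1/2}\|^{-2}I$, hence $S\ge A\|C^{1/2}\|^{-2}I$. Finally, I translate back: for every $x\in\mathcal{H}$,
\begin{equation*}
\langle Sx,x\rangle_{\mathcal{A}}=\int_{\Omega}\langle x,F_{\omega}\rangle_{\mathcal{A}}\langle F_{\omega},x\rangle_{\mathcal{A}}\,d\mu(\omega),
\end{equation*}
so the operator inequality $A\|C^{1/2}\|^{-2}I\le S\le B\|C^{-1/2}\|^{2}I$ is exactly the integral frame inequality with the claimed bounds.

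The main obstacle I anticipate is the commutativity step $CS=SC$; everything after it is routine spectral manipulation, but using the self-adjointness of $S_C$ to commute $C$ past $S$ (and thereby legitimize the symmetric factorization $C^{1/2}SC^{1/2}$) is the structural input that makes the conjugation trick work. The secondary concern is the $a~priori$ existence of the operator $S$, which I resolve by defining it through $C^{-1}S_C$ and recovering the integral representation via Lemma \ref{l4}.
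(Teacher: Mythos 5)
Your proof is correct and follows essentially the same route as the paper's: both rest on the factorization $S_{C}=CS$, the commutation of $C$ with $S$, and the identities $\|C^{\pm 1/2}\|^{2}=\|C^{\pm 1}\|$. The only real difference is organizational — you package the paper's two pointwise chains of inequalities as a single conjugation $S=C^{-1/2}S_{C}C^{-1/2}$, and, to your credit, you explicitly justify the commutativity $CS=SC$ (via self-adjointness of $S_{C}$, $C$ and $S$) that the paper uses only tacitly when it writes $\langle C^{1/2}Sx,C^{1/2}x\rangle_{\mathcal{A}}\leq\|C^{1/2}\|^{2}\langle Sx,x\rangle_{\mathcal{A}}$ and manipulates $(CS)^{1/2}$.
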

\begin{proof}
Let $F$ be a $C$-controlled integral frame for $\mathcal{H}$ associted to $(\Omega , \mu)$, with bounds $A$ and $B$.\\
On one hand we have,
\begin{align*}
A\langle x,x\rangle_{\mathcal{A}} &\leq \langle S_{C}x,x\rangle_{\mathcal{A}} \\
&=\langle CSx,x\rangle_{\mathcal{A}}\\
&= \langle C^{\frac{1}{2}}Sx,C^{\frac{1}{2}}x\rangle_{\mathcal{A}}  \\
&\leq \|C^{\frac{1}{2}}\|^{2}\langle Sx,x\rangle_{\mathcal{A}}  \\
\end{align*}
So,
\begin{equation}\label{p10}
A\|C^{\frac{1}{2}}\|^{-2}\langle x,x\rangle_{\mathcal{A}} \leq\int_{\Omega}\langle x,F_{\omega}\rangle_{\mathcal{A}} \langle F_{\omega},x\rangle_{\mathcal{A}} d\mu(w)
\end{equation}
On other hand, for all $x\in \mathcal{H}$, we have,
\begin{align*}
\int_{\Omega}\langle x,F_{\omega}\rangle_{\mathcal{A}} \langle F_{\omega},x\rangle_{\mathcal{A}} d\mu(w)&=\langle Sx,x\rangle_{\mathcal{A}} \\
&=\langle C^{-1}CSx,x\rangle_{\mathcal{A}} \\
&=\langle (C^{-1}CS)^{\frac{1}{2}}x,(C^{-1}CS)^{\frac{1}{2}}x\rangle_{\mathcal{A}}\\
&=\|(C^{-1}CS)^{\frac{1}{2}}x\|^{2}\\
&\leq \|C^{\frac{-1}{2}}\|^{2} \|(CS)^{\frac{1}{2}}x\|^{2}\\
&= \|C^{\frac{-1}{2}}\|^{2}\langle (CS)^{\frac{1}{2}}x,(CS)^{\frac{1}{2}}x\rangle_{\mathcal{A}}\\
&=\|C^{\frac{-1}{2}}\|^{2}\langle (S_{C})^{\frac{1}{2}}x,(S_{C})^{\frac{1}{2}}x\rangle_{\mathcal{A}}\\
&=\|C^{\frac{-1}{2}}\|^{2}\langle S_{C}x,x\rangle_{\mathcal{A}}\\
&\leq \|C^{\frac{-1}{2}}\|^{2}B\langle x,x\rangle_{\mathcal{A}}.
\end{align*}
Then,
\begin{equation}\label{pp10}
\int_{\Omega}\langle x,F_{\omega}\rangle_{\mathcal{A}} \langle F_{\omega},x\rangle_{\mathcal{A}} d\mu(w)\leq \|C^{\frac{-1}{2}}\|^{2}B\langle x,x\rangle_{\mathcal{A}}.
\end{equation}
From \eqref{p10} and \eqref{pp10} we conclude that $F$ is an integral frame $\mathcal{H}$ associted to $(\Omega , \mu)$ with bounds $A\|C^{\frac{1}{2}}\|^{-2}$ and $B\|C^{\frac{-1}{2}}\|^{2}$.
\end{proof}
\begin{proposition}
Let $C\in GL^{+}(\mathcal{H})$ and $F$ be an integral frame for $\mathcal{H}$ associted to $(\Omega , \mu)$ with bounds $A$ and $B$. Then $F$ is a $C$-controlled integral frame for $\mathcal{H}$ associted to $(\Omega , \mu)$ with bounds $A\|C^{\frac{-1}{2}}\|^{2}$ and $B\|C^{\frac{1}{2}}\|^{2}$.
\end{proposition}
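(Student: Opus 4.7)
The plan is to mirror, in the converse direction, the argument of the preceding proposition: write $\langle S_{C}x, x\rangle_{\mathcal{A}}$ in terms of $\langle Sx, x\rangle_{\mathcal{A}}$ through formal insertions of $C^{1/2}$ and $C^{-1/2}$, and then invoke the integral frame bounds $A, B$.

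First I would note that by Lemma \ref{l4} (applied to the bounded adjointable operator $C$), the associated operators satisfy $S_{C}x = C Sx$, so $\langle S_{C}x, x\rangle_{\mathcal{A}} = \langle CSx, x\rangle_{\mathcal{A}}$. Thus everything reduces to estimating $\langle CSx, x\rangle_{\mathcal{A}}$ in terms of $\langle Sx, x\rangle_{\mathcal{A}} = \|S^{1/2}x\|^{2}$, which is sandwiched between $A\langle x, x\rangle_{\mathcal{A}}$ and $B\langle x, x\rangle_{\mathcal{A}}$ by hypothesis.

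For the upper bound, I would follow the formal identity used in the previous proposition and rewrite $\langle CSx, x\rangle_{\mathcal{A}} = \|(CS)^{1/2}x\|^{2} = \|C^{1/2}S^{1/2}x\|^{2}$; applying Lemma \ref{l1} with the operator $C^{1/2}$ then gives
\[
\|C^{1/2}S^{1/2}x\|^{2} \leq \|C^{1/2}\|^{2}\, \langle Sx, x\rangle_{\mathcal{A}} \leq B\|C^{1/2}\|^{2}\, \langle x, x\rangle_{\mathcal{A}}.
\]
For the lower bound, I would symmetrically factor $S^{1/2}x = C^{-1/2}(C^{1/2}S^{1/2}x)$ and apply Lemma \ref{l1} with $C^{-1/2}$ to obtain $\langle Sx, x\rangle_{\mathcal{A}} \leq \|C^{-1/2}\|^{2}\, \langle S_{C}x, x\rangle_{\mathcal{A}}$; combining this with $A\langle x, x\rangle_{\mathcal{A}} \leq \langle Sx, x\rangle_{\mathcal{A}}$ then yields $\langle S_{C}x, x\rangle_{\mathcal{A}} \geq A\|C^{-1/2}\|^{-2}\langle x, x\rangle_{\mathcal{A}}$.

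The one delicate point is the informal identification $(CS)^{1/2} = C^{1/2}S^{1/2}$, which is rigorous only when $C$ and $S$ commute; this step is handled in the same spirit as the preceding proposition, where the analogous manipulation is used. I also expect to flag that the natural outcome of this derivation is a lower bound of the form $A\|C^{-1/2}\|^{-2}$ (as a sanity check: when $C = \alpha I$ with $\alpha > 0$, the $C$-controlled bounds should come out to $\alpha A$ and $\alpha B$, which agrees with $\|C^{-1/2}\|^{-2} = \alpha$ but not with the stated $\|C^{-1/2}\|^{2} = \alpha^{-1}$), suggesting that the exponent in the statement has a sign typo.
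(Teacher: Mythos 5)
Your proof follows essentially the same route as the paper's: identify $\langle S_{C}x,x\rangle_{\mathcal{A}}$ with $\langle CSx,x\rangle_{\mathcal{A}}$, insert $C^{1/2}$ (resp.\ $C^{-1/2}$) and apply Lemma \ref{l1} to deduce the upper (resp.\ lower) bound from the integral frame inequality. Both of your caveats are well taken: the paper's own computation likewise arrives at $A\|C^{-1/2}\|^{-2}\langle x,x\rangle_{\mathcal{A}}\leq\langle S_{C}x,x\rangle_{\mathcal{A}}$ and its closing sentence states the bounds as $A\|C^{\frac{-1}{2}}\|^{-2}$ and $B\|C^{\frac{1}{2}}\|^{2}$, so the exponent in the proposition's statement is indeed a typo (your $C=\alpha I$ sanity check settles the correct sign), and the identification $(CS)^{1/2}=C^{1/2}S^{1/2}$, which needs $C$ and $S$ to commute, is used in the paper without comment.
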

\begin{proof}
Let $F$ be an integral frame for $\mathcal{H}$ associted to $(\Omega , \mu)$ with bounds $A$ and $B$. Then for all $x\in \mathcal{H}$, we have
\begin{align*}
A\langle x, x\rangle_{\mathcal{A}}&\leq \langle Sx,x\rangle_{\mathcal{A}} \\
&=\langle C^{-1}CSx,x\rangle_{\mathcal{A}} \\
&=\langle (C^{-1}CS)^{\frac{1}{2}}x,(C^{-1}CS)^{\frac{1}{2}}x\rangle_{\mathcal{A}}\\
&=\|(C^{-1}CS)^{\frac{1}{2}}x\|^{2}\\
&\leq \|C^{\frac{-1}{2}}\|^{2} \|(CS)^{\frac{1}{2}}x\|^{2}\\
&= \|C^{\frac{-1}{2}}\|^{2}\langle (CS)^{\frac{1}{2}}x,(CS)^{\frac{1}{2}}x\rangle_{\mathcal{A}}\\
&=\|C^{\frac{-1}{2}}\|^{2}\langle (S_{C})^{\frac{1}{2}}x,(S_{C})^{\frac{1}{2}}x\rangle_{\mathcal{A}}\\
&=\|C^{\frac{-1}{2}}\|^{2}\langle S_{C}x,x\rangle_{\mathcal{A}}\\
\end{align*}
So,
\begin{equation}
A\|C^{\frac{-1}{2}}\|^{-2}\langle x, x\rangle_{\mathcal{A}} \leq \langle S_{C}x,x\rangle_{\mathcal{A}}
\end{equation}
Hence, for all $x\in \mathcal{H}$, we have,
\begin{align*}
\langle S_{C}x,x\rangle_{\mathcal{A}}&=\langle CSx,x\rangle_{\mathcal{A}}\\
&=\langle C^{\frac{1}{2}}Sx,C^{\frac{1}{2}}x\rangle_{\mathcal{A}}\\
&\leq \|C^{\frac{1}{2}}\|^{2}\langle Sx,x\rangle_{\mathcal{A}} \\
&\leq \|C^{\frac{1}{2}}\|^{2}B\langle x,x\rangle_{\mathcal{A}}. 
\end{align*}
Therefore we conclude that $F$ is a $C$-controlled integral frame $\mathcal{H}$ associted to $(\Omega , \mu)$ with bounds $A\|C^{\frac{-1}{2}}\|^{-2}$ and $B\|C^{\frac{1}{2}}\|^{2}$.
\end{proof}
\begin{theorem}
Let $\mathcal{H}$ be a Hilbert $\mathcal{A}$-module and $(\Omega , \mu)$ a measure space. Let $F$ be a $C$-controlled integral frame for $\mathcal{H}$ associted to $(\Omega , \mu)$ with the frame operator $S_{C}$ and bounds $A$ and $B$. Let $K\in End_{\mathcal{A}}^{\ast}(\mathcal{H})$ a surjective operator such that $KC=CK$. Then $KF$ is a $C$-controlled integral frame for $\mathcal{H}$ with the operator frame $KS_{C}K^{\ast}$.
\end{theorem}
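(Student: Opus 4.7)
The plan is to compute the frame operator of $KF$ directly from the definition and then extract the frame inequality from the $C$-controlled frame inequality for $F$ by the change of variable $x \mapsto K^*x$.

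First I would unravel the definition. For $x\in\mathcal{H}$, the candidate frame operator is
\begin{equation*}
S_{C}^{KF}x=\int_{\Omega}\langle x,KF_{\omega}\rangle_{\mathcal{A}}\,C(KF_{\omega})\,d\mu(\omega).
\end{equation*}
Using adjointability $\langle x,KF_{\omega}\rangle_{\mathcal{A}}=\langle K^{\ast}x,F_{\omega}\rangle_{\mathcal{A}}$ and the commutation $CK=KC$, the integrand becomes $\langle K^{\ast}x,F_{\omega}\rangle_{\mathcal{A}}\,KCF_{\omega}$. Then I would pull the bounded $\mathcal{A}$-linear map $K$ out of the Bochner integral via Lemma~\ref{l4}, which yields
\begin{equation*}
S_{C}^{KF}x = K\!\int_{\Omega}\langle K^{\ast}x,F_{\omega}\rangle_{\mathcal{A}}\,CF_{\omega}\,d\mu(\omega) = KS_{C}K^{\ast}x.
\end{equation*}
This identifies the frame operator as claimed.

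Next, for the bounds I would pair $S_{C}^{KF}x$ with $x$ to get $\langle S_{C}^{KF}x,x\rangle_{\mathcal{A}}=\langle S_{C}K^{\ast}x,K^{\ast}x\rangle_{\mathcal{A}}$, and substitute $y=K^{\ast}x$ into the hypothesis
\begin{equation*}
A\langle y,y\rangle_{\mathcal{A}}\leq \langle S_{C}y,y\rangle_{\mathcal{A}}\leq B\langle y,y\rangle_{\mathcal{A}}.
\end{equation*}
The upper bound is then controlled by Lemma~\ref{l1}, which gives $\langle K^{\ast}x,K^{\ast}x\rangle_{\mathcal{A}}\leq\|K\|^{2}\langle x,x\rangle_{\mathcal{A}}$, yielding an upper bound $B\|K\|^{2}$. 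For the lower bound the key point is that $K$ is surjective, so Lemma~\ref{l2}(iii) furnishes a constant $m>0$ with $m\langle x,x\rangle_{\mathcal{A}}\leq\langle K^{\ast}x,K^{\ast}x\rangle_{\mathcal{A}}$, giving the lower bound $Am$.

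The main technical point — and the only place the hypotheses are used nontrivially — is the bounding step: one needs both the surjectivity of $K$ (to get a positive lower frame bound via Lemma~\ref{l2}) and the commutation $CK=CK$ (to recognize $KS_{C}K^{\ast}$ as the frame operator of $KF$ in the $C$-controlled sense). Everything else is a routine manipulation of the Bochner integral and the $\mathcal{A}$-valued inner product. I would therefore organize the proof as (i) compute $S_{C}^{KF}=KS_{C}K^{\ast}$, (ii) sandwich $\langle S_{C}K^{\ast}x,K^{\ast}x\rangle_{\mathcal{A}}$ using the frame bounds of $F$, and (iii) convert the bounds on $K^{\ast}x$ to bounds on $x$ by Lemmas~\ref{l1} and~\ref{l2}.
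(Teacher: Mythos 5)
Your proposal is correct and follows essentially the same route as the paper: substitute $K^{\ast}x$ into the frame inequality for $F$, use the commutation $KC=CK$ and Lemma~\ref{l4} to identify $KS_{C}K^{\ast}$ as the frame operator of $KF$, bound above by Lemma~\ref{l1}, and bound below using surjectivity of $K$ (the paper invokes Lemma~\ref{l3}(ii) to get the explicit constant $\|(KK^{\ast})^{-1}\|^{-1}$ where you invoke the equivalent Lemma~\ref{l2}(iii)). Aside from the harmless typo ``$CK=CK$'' in your final paragraph, nothing is missing.
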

\begin{proof}
Let $F$ be a $C$-controlled integral frame for $\mathcal{H}$ associted to $(\Omega , \mu)$, then,
\begin{equation*}
A\langle K^{\ast}x,K^{\ast}x\rangle_{\mathcal{A}} \leq\int_{\Omega}\langle K^{\ast}x,F_{\omega}\rangle_{\mathcal{A}} \langle C F_{\omega},K^{\ast}x\rangle_{\mathcal{A}} d\mu(w)\leq B\langle K^{\ast}x,K^{\ast}x\rangle_{\mathcal{A}}, \quad  x\in \mathcal{H}.
\end{equation*}
By lemma \ref{l1} and lemma \ref{l3}, we obtain,
\begin{equation*}
A\|(KK^{\ast})^{-1}\|^{-1}\langle x,x\rangle_{\mathcal{A}} \leq\int_{\Omega}\langle x,KF_{\omega}\rangle_{\mathcal{A}} \langle CK F_{\omega},x\rangle_{\mathcal{A}} d\mu(w)\leq B\|K\|^{2}\langle x,x\rangle_{\mathcal{A}}, \quad  x\in \mathcal{H}.
\end{equation*}
which shows that $KF$ is a $C$-controlled integral operator.\\
Moreover, by lemma \ref{l4}, we have,
\begin{equation*}
KS_{C}K^{\ast}x=K\int_{\Omega}\langle K^{\ast}x,F_{\omega}\rangle_{\mathcal{A}} CF_{\omega} d\mu (\omega)= \int_{\Omega}\langle x,KF_{\omega}\rangle_{\mathcal{A}} CKF_{\omega} d\mu (\omega),
\end{equation*}
which ends the proof.
\end{proof}

\section{Controlled $\ast$-integral frames}
\begin{definition}
	Let $\mathcal{H}$ be a Hilbert $\mathcal{A}$-module and $(\Omega , \mu)$ a measure space. A $C$-controlled $\ast$-integral frame in $C^{\ast}$-module $\mathcal{H}$ is a map $F: \Omega \longrightarrow \mathcal{H}$ such that there exist two strictly nonzero elements A,B in A such that,
	\begin{equation}\label{eqd2}
	A\langle x,x\rangle_{\mathcal{A}} A^{\ast} \leq\int_{\Omega}\langle x,F_{\omega}\rangle_{\mathcal{A}} \langle C F_{\omega},x\rangle_{\mathcal{A}} d\mu(w)\leq B\langle x,x\rangle_{\mathcal{A}} B^{\ast}, \quad  x\in \mathcal{H}.
	\end{equation}
	The elements $A$ and $B$ are called the $C$-controlled $\ast$-integral frame bounds.\\
	If $A=B$, we call this a $C$-controlled $\ast$-integral tight frame.\\
	If $A=B=1$, it's called a $C$-controlled $\ast$-integral parseval frame.\\
	If only the right hand inequality of \eqref{eqd2} is satisfied, we call $F$ a $C$-controlled $\ast$-integral Bessel mapping with bound $B$.
\end{definition}
\begin{example} 
	Let $\mathcal{H}=\mathcal{A}=\{(a_{n})_{n\in \mathbb{N}}\subset \mathbb{C}, \quad \sum_{n\geq 0}|a_{n}|< \infty\}$ \\
	Endeweed with the product and the inner product defined as follow.
	\[
	\begin{array}{ccc}
	\mathcal{A}\times \mathcal{A} & \rightarrow  & \mathcal{A} \\ 
	((a_{n})_{n\in \mathbb{N}},(b_{n})_{n\in \mathbb{N}}) & \mapsto  & (a_{n})_{n\in \mathbb{N}}.(b_{n})_{n\in \mathbb{N}}=(a_{n}b_{n})_{n\in \mathbb{N}}%
	\end{array}%
	\]\\
	and
		\[
	\begin{array}{ccc}
	\mathcal{H}\times \mathcal{H} & \rightarrow  & \mathcal{A} \\ 
	((a_{n})_{n\in \mathbb{N}},(b_{n})_{n\in \mathbb{N}}) & \mapsto  & \langle (a_{n})_{n\in \mathbb{N}},(b_{n})_{n\in \mathbb{N}}\rangle_{\mathcal{A}} =(a_{n}\overline{b_{n}})_{n\in \mathbb{N}}%
	\end{array}%
	\]\\
	Let $\Omega = [0,+\infty[$ endewed with the lebesgue's measure wich's a measure space.
	\begin{align*}
	F:\qquad [0,+\infty[ &\longrightarrow \mathcal{H}\\
	w&\longrightarrow F_{w}=(F^{w}_{n})_{n\in \mathbb{N}},
	\end{align*}
	where 
	\begin{equation*}
	F^{w}_{n} = \frac{1}{n+1} \quad if \quad n=[w] \qquad and \qquad F^{w}_{n} =0 \quad elsewhere,
	\end{equation*}
	where $[w]$ is the whole part of $w$.\\
	On the other hand, we consider the measure space $(\Omega,\mu)$, where $\mu$ is the lebesgue measure restricted to $ [0,+\infty[$, and the operator,
	\begin{align*}
C:\qquad\mathcal{H}&\longrightarrow \mathcal{H}  \\
(a_{n})_{n\in \mathbb{N}}	&\longrightarrow (\alpha a_{n})_{n\in \mathbb{N}},
	\end{align*}
	where $\alpha$ is a strictly positive real number.\\
	It's clear that $C$ is an invertible and both operators, and $C$,$C^{-1}$ are bounded. \\
	So,
	\begin{align*}
&\int_{\Omega}\langle (a_{n})_{n\in \mathbb{N}},F_{w}\rangle_{\mathcal{A}}\langle CF_{w},(a_{n})_{n\in \mathbb{N}}\rangle_{\mathcal{A}}d\mu(w)\\
&=\int_{1}^{+\infty}(0,0,...,\frac{a_{[w]}}{[w]+1},0,...)\alpha (0,0,...,\overline{\frac{a_{[w]}}{[w]+1}},0,...)d\mu (w)\\
&=\alpha \sum_{p=0}^{+\infty}\int_{p}^{p+1}(0,0,...,\frac{|a_{[w]}|^{2}}{([w]+1)^{2 }},0,...)d\mu (w)\\
&=\alpha \sum_{p=0}^{+\infty}(0,0,...,\frac{|a_{[p]}|^{2}}{(p+1)^{2 }},0,...)\\
&=\alpha (\frac{|a_{n}|^{2}}{(n+1)^{2 }})_{n\in \mathbb{N}}\\
&=\sqrt{\alpha}(1,\frac{1}{2},\frac{1}{3},...,\frac{1}{n},...)\langle (a_{n})_{n\in \mathbb{N}},(a_{n})_{n\in \mathbb{N}}\rangle_{\mathcal{A}}\sqrt{\alpha}(1,\frac{1}{2},\frac{1}{3},...,\frac{1}{n},...).
\end{align*}
Which shows that $F$ is a $C$-controlled $\ast$-integral tight frame for $\mathcal{H}$ with bound $A=\sqrt{\alpha}(1,\frac{1}{2},\frac{1}{3},...,\frac{1}{n},...) \in \mathcal{A}$
\end{example}
\begin{definition}
	Let $F$ be a $C$-controlled $\ast$-integral frame for $\mathcal{H}$ associted to $(\Omega , \mu)$. We define the frame operator  $S_{C} : \mathcal{H} \longrightarrow \mathcal{H}$ for $F$ by,
	\begin{equation*}
	S_{C}x=\int_{\Omega}\langle x,F_{\omega}\rangle_{\mathcal{A}} CF_{\omega}  d\mu(\omega), \quad x\in \mathcal{H}.
	\end{equation*}
\end{definition}
\begin{proposition}
	The frame operator $S_{C}$ is positive, selfadjoint, bounded and invertible.
\end{proposition}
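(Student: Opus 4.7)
The plan is to mirror the earlier argument for the scalar-bound $C$-controlled integral frame proposition, adapting each step to the $\ast$-bound setting where $A, B$ are taken to be invertible elements of $\mathcal{A}$ (the standard reading of ``strictly nonzero'' in $\ast$-frame theory).

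First, using Lemma \ref{l4} applied to the bounded $\mathcal{A}$-linear functional $\langle\,\cdot\,, x\rangle_{\mathcal{A}}$, I would obtain
\begin{equation*}
\langle S_C x, x\rangle_{\mathcal{A}} = \int_\Omega \langle x, F_\omega\rangle_{\mathcal{A}} \langle CF_\omega, x\rangle_{\mathcal{A}}\, d\mu(\omega).
\end{equation*}
The lower bound in \eqref{eqd2} then forces $\langle S_C x, x\rangle_{\mathcal{A}} \geq A\langle x,x\rangle_{\mathcal{A}} A^* \geq 0$, so $S_C$ is positive, hence self-adjoint. For boundedness, taking norms in the upper $\ast$-inequality gives $\|\langle S_C x, x\rangle_{\mathcal{A}}\| \leq \|B\|^2\|x\|^2$, and the standard identity $\|S_C\| = \sup_{\|x\|=1}\|\langle S_C x, x\rangle_{\mathcal{A}}\|$ for positive self-adjoint adjointable operators yields $\|S_C\| \leq \|B\|^2$.

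The substantive step is invertibility. The scalar-case trick (showing $\|I - B^{-1}S_C\| < 1$ and invoking Theorem \ref{t0}) does not transfer directly because $A, B$ are no longer scalars, so $B^{-1}S_C$ is not even an obvious candidate. Instead I would argue via Lemma \ref{l2}: using invertibility of $A$, write
\begin{equation*}
\langle x, x\rangle_{\mathcal{A}} = A^{-1}\bigl(A\langle x,x\rangle_{\mathcal{A}} A^*\bigr)(A^*)^{-1},
\end{equation*}
so taking norms, combining with the lower $\ast$-bound, and applying Cauchy--Schwarz, I obtain
\begin{equation*}
\|x\|^2 \leq \|A^{-1}\|^2\,\|\langle S_C x, x\rangle_{\mathcal{A}}\| \leq \|A^{-1}\|^2\,\|S_C x\|\,\|x\|,
\end{equation*}
which gives $\|x\| \leq \|A^{-1}\|^2\,\|S_C x\|$. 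Thus $S_C$ is bounded below in norm. Since $S_C$ is self-adjoint, Lemma \ref{l2} (applied to $T = S_C$, so that $T^* = S_C$ is bounded below) forces $S_C$ to be surjective, while boundedness below gives injectivity; hence $S_C$ is a bijection and therefore invertible.

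The principal obstacle is this invertibility step: the scalar Neumann-series route has to be replaced entirely by a bounded-below plus surjectivity argument through Lemma \ref{l2}, and this fundamentally depends on having $A^{-1} \in \mathcal{A}$ available, so the proof pins down what ``strictly nonzero'' must mean for the definition to yield an invertible frame operator.
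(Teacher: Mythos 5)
Your proof is correct (granting, as you explicitly flag, that ``strictly nonzero'' must be read as ``invertible in $\mathcal{A}$''), and it takes a genuinely different route from the paper's on the one step that matters. The paper's own proof of this proposition shares your first half verbatim: the Lemma \ref{l4} computation of $\langle S_{C}x,x\rangle_{\mathcal{A}}$ and the deduction of positivity and self-adjointness from the lower bound in \eqref{eqd2}. But for invertibility the paper simply writes down the scalar-looking sandwich $A\langle x,x\rangle_{\mathcal{A}} \leq \langle S_{C}x,x\rangle_{\mathcal{A}} \leq B\langle x,x\rangle_{\mathcal{A}}$ --- which does not literally follow from \eqref{eqd2}, whose bounds are $A\langle x,x\rangle_{\mathcal{A}}A^{\ast}$ and $B\langle x,x\rangle_{\mathcal{A}}B^{\ast}$ with $A,B\in\mathcal{A}$; this is a carry-over from the scalar case --- and then outsources the conclusion to Theorem 2.5 of \cite{MNAZ}. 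You instead stay self-contained: the chain $\|x\|^{2}\leq\|A^{-1}\|^{2}\,\|\langle S_{C}x,x\rangle_{\mathcal{A}}\|\leq\|A^{-1}\|^{2}\,\|S_{C}x\|\,\|x\|$ shows $S_{C}$ is bounded below, and self-adjointness plus Lemma \ref{l2} upgrades bounded-below to surjective, hence bijective. Your approach buys three things the paper's does not: independence from the external citation, an explicit norm bound $\|S_{C}\|\leq\|B\|^{2}$ (the paper never addresses boundedness in the $\ast$-setting), and a precise identification of where invertibility of the lower bound $A$ is actually used; the paper's approach buys only brevity, at the price of an inequality that is not valid as written. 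Both arguments are at the same level of rigor regarding the a priori adjointability of $S_{C}$, which neither you nor the paper establishes before manipulating it.
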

\begin{proof} For all $x\in \mathcal{H}$, by lemma \eqref{l4}, we have,
	\begin{equation*}
	\langle S_{C}x,x\rangle_{\mathcal{A}} = \langle \int_{\Omega}\langle x,F_{\omega}\rangle_{\mathcal{A}} CF_{\omega}  d\mu(\omega),x\rangle_{\mathcal{A}}=\int_{\Omega}\langle x,F_{\omega}\rangle_{\mathcal{A}} \langle CF_{\omega}  ,x\rangle_{\mathcal{A}} d\mu(\omega).
	\end{equation*}
	By left hand of inequality \eqref{eqd2}, we deduce that $S_{C}$ is a positive operator, also, it's sefladjoint.\\
	From \eqref{eqd2}, we have,
	\begin{equation*}\label{111}
	A\langle x,x\rangle_{\mathcal{A}} \leq\langle S_{C}x,x\rangle_{\mathcal{A}}\leq B\langle x,x\rangle_{\mathcal{A}}, \quad  x\in \mathcal{H}.
	\end{equation*}
	The Theorem 2.5 in \cite{MNAZ} shows that  $S_{C}$ is invertible.
\end{proof}
\begin{proposition}
	Let $C\in GL^{+}(\mathcal{H})$ and $F$ be a $C$-controlled $\ast$-integral frame for $\mathcal{H}$ associted to $(\Omega , \mu)$ with bounds $A$ and $B$. Then $F$ is a $\ast$-integral frame $\mathcal{H}$ associted to $(\Omega , \mu)$ with bounds $\|C^{\frac{1}{2}}\|^{-1}A$ and $\|C^{\frac{-1}{2}}\|B$
\end{proposition}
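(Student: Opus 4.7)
The plan is to mirror the proof of the analogous non-$\ast$ proposition earlier in the paper, keeping track of how the sandwiching elements $A, A^{\ast}$ and $B, B^{\ast}$ get rescaled by the real scalars $\|C^{\pm 1/2}\|$. Since $\|C^{1/2}\|$ and $\|C^{-1/2}\|$ are positive real numbers, they commute with every element of $\mathcal{A}$ and satisfy $(\|C^{1/2}\|^{-1}A)^{\ast}=\|C^{1/2}\|^{-1}A^{\ast}$, so the proposed bounds $\|C^{1/2}\|^{-1}A$ and $\|C^{-1/2}\|B$ are genuine nonzero elements of $\mathcal{A}$ of the correct form.

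For the lower bound, I would start from the defining inequality of a $C$-controlled $\ast$-integral frame and chain
\[
A\langle x,x\rangle_{\mathcal{A}} A^{\ast} \leq \langle S_{C}x, x\rangle_{\mathcal{A}} = \langle CSx, x\rangle_{\mathcal{A}} = \langle C^{1/2}Sx, C^{1/2}x\rangle_{\mathcal{A}} \leq \|C^{1/2}\|^{2}\langle Sx, x\rangle_{\mathcal{A}},
\]
using self-adjointness of $C^{1/2}$ and the same module-valued norm estimate (via Lemma~\ref{l1}) that is employed in the parallel non-$\ast$ proposition. Rescaling by the positive scalar $\|C^{1/2}\|^{-2}$ then produces the required lower bound
\[
(\|C^{1/2}\|^{-1}A)\,\langle x,x\rangle_{\mathcal{A}}\,(\|C^{1/2}\|^{-1}A)^{\ast} \leq \langle Sx, x\rangle_{\mathcal{A}}.
\]

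For the upper bound I would reuse the factorization trick $Sx = C^{-1}CSx$ exactly as before:
\[
\langle Sx, x\rangle_{\mathcal{A}} = \|(C^{-1}CS)^{1/2}x\|^{2} \leq \|C^{-1/2}\|^{2}\|(CS)^{1/2}x\|^{2} = \|C^{-1/2}\|^{2}\langle S_{C}x, x\rangle_{\mathcal{A}},
\]
and then plug in the upper $C$-controlled $\ast$-integral frame bound to get
\[
\langle Sx, x\rangle_{\mathcal{A}} \leq \|C^{-1/2}\|^{2}\, B\langle x,x\rangle_{\mathcal{A}} B^{\ast} = (\|C^{-1/2}\|B)\,\langle x, x\rangle_{\mathcal{A}}\,(\|C^{-1/2}\|B)^{\ast}.
\]
Together the two chains exhibit $F$ as a $\ast$-integral frame with bounds $\|C^{1/2}\|^{-1}A$ and $\|C^{-1/2}\|B$.

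The main obstacle I anticipate is the same one tacitly passed over in the non-$\ast$ predecessor: verifying the module-valued inequality $\langle C^{1/2}Sx, C^{1/2}x\rangle_{\mathcal{A}} \leq \|C^{1/2}\|^{2}\langle Sx, x\rangle_{\mathcal{A}}$ and the corresponding bound with $C^{-1/2}$ in a setting where $CS$ need not be self-adjoint. Assuming these are accepted as in the earlier proposition (through positivity of $C^{\pm 1/2}$ together with Lemma~\ref{l1}), the remainder is pure bookkeeping that simply carries along the adjoint factors $A^{\ast}$ and $B^{\ast}$.
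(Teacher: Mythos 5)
Your proposal is correct and follows essentially the same route as the paper: the lower bound via $\langle S_{C}x,x\rangle_{\mathcal{A}}=\langle C^{1/2}Sx,C^{1/2}x\rangle_{\mathcal{A}}\leq\|C^{1/2}\|^{2}\langle Sx,x\rangle_{\mathcal{A}}$ and the upper bound via the factorization $Sx=C^{-1}CSx$, carrying the factors $A^{\ast}$ and $B^{\ast}$ along. The tacit steps you flag (the module-valued estimate through $C^{\pm 1/2}$ when $CS$ need not be self-adjoint) are passed over in exactly the same way in the paper's own proof, so your argument matches it in both substance and level of rigor.
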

\begin{proof}
	Let $F$ be a $C$-controlled $\ast$-integral frame for $\mathcal{H}$ associted to $(\Omega , \mu)$, with bounds $A$ and $B$.\\
	On one hand we have 
	\begin{align*}
	A\langle x,x\rangle_{\mathcal{A}} A^{\ast} &\leq \langle S_{C}x,x\rangle_{\mathcal{A}} \\
	&=\langle CSx,x\rangle_{\mathcal{A}}\\
	&= \langle C^{\frac{1}{2}}Sx,C^{\frac{1}{2}}x\rangle_{\mathcal{A}}  \\
	&\leq \|C^{\frac{1}{2}}\|^{2}\langle Sx,x\rangle_{\mathcal{A}}.  \\
	\end{align*}
	So,
	\begin{equation}\label{p10}
	(\|C^{\frac{1}{2}}\|^{-1}A)\langle x,x\rangle_{\mathcal{A}} (\|C^{\frac{1}{2}}\|^{-1}A)^{\ast} \leq\int_{\Omega}\langle x,F_{\omega}\rangle_{\mathcal{A}} \langle F_{\omega},x\rangle_{\mathcal{A}} d\mu(w).
	\end{equation}
	On other hand, for all $x\in \mathcal{H}$, we have,
	\begin{align*}
	\int_{\Omega}\langle x,F_{\omega}\rangle_{\mathcal{A}} \langle F_{\omega},x\rangle_{\mathcal{A}} d\mu(w)&=\langle Sx,x\rangle_{\mathcal{A}} \\
	&=\langle C^{-1}CSx,x\rangle_{\mathcal{A}} \\
	&=\langle (C^{-1}CS)^{\frac{1}{2}}x,(C^{-1}CS)^{\frac{1}{2}}x\rangle_{\mathcal{A}}\\
	&=\|(C^{-1}CS)^{\frac{1}{2}}x\|^{2}\\
	&\leq \|C^{\frac{-1}{2}}\|^{2} \|(CS)^{\frac{1}{2}}x\|^{2}\\
	&= \|C^{\frac{-1}{2}}\|^{2}\langle (CS)^{\frac{1}{2}}x,(CS)^{\frac{1}{2}}x\rangle_{\mathcal{A}}\\
	&=\|C^{\frac{-1}{2}}\|^{2}\langle (S_{C})^{\frac{1}{2}}x,(S_{C})^{\frac{1}{2}}x\rangle_{\mathcal{A}}\\
	&=\|C^{\frac{-1}{2}}\|^{2}\langle S_{C}x,x\rangle_{\mathcal{A}}\\
	&\leq \|C^{\frac{-1}{2}}\|^{2}B\langle x,x\rangle_{\mathcal{A}} B^{\ast}.
	\end{align*}
	Then,
	\begin{equation}\label{pp10}
	\int_{\Omega}\langle x,F_{\omega}\rangle_{\mathcal{A}} \langle F_{\omega},x\rangle_{\mathcal{A}} d\mu(w)\leq (\|C^{\frac{-1}{2}}\|B)\langle x,x\rangle_{\mathcal{A}} (\|C^{\frac{-1}{2}}\|B)^{\ast}.
	\end{equation}
	From \eqref{p10} and \eqref{pp10} we conclude that $F$ is a $\ast$-integral frame $\mathcal{H}$ associted to $(\Omega , \mu)$ with bounds $A\|C^{\frac{1}{2}}\|^{-2}$ and $B\|C^{\frac{-1}{2}}\|^{2}$.
\end{proof}
\begin{proposition}
	Let $C\in GL^{+}(\mathcal{H})$ and $F$ be an $\ast$-integral frame for $\mathcal{H}$ associted to $(\Omega , \mu)$ with bounds $A$ and $B$. Then $F$ is a $C$-controlled $\ast$-integral frame for $\mathcal{H}$ associted to $(\Omega , \mu)$ with bounds $\|C^{\frac{-1}{2}}\|^{-1}A$ and $\|C^{\frac{1}{2}}\|B$.
\end{proposition}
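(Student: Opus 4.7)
The plan is to reverse the comparison made in the preceding proposition: there, $C$-controlled $\ast$-integral bounds were converted into ordinary $\ast$-integral bounds, and here I travel the opposite direction. By Lemma \ref{l4}, the integral in \eqref{eqd2} equals $\langle S_C x, x\rangle_{\mathcal{A}} = \langle CSx, x\rangle_{\mathcal{A}}$, where $S$ denotes the frame operator of the $\ast$-integral frame $F$. Hence the whole argument reduces to a two-sided comparison of $\langle S_C x, x\rangle_{\mathcal{A}}$ and $\langle Sx, x\rangle_{\mathcal{A}}$ through the factorizations $C = C^{1/2}C^{1/2}$ and $I = C^{-1}C$, sandwiched with the hypothesized $\ast$-integral bounds on $\langle Sx, x\rangle_{\mathcal{A}}$.

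For the upper $C$-controlled bound I would compute
\begin{equation*}
\langle S_C x, x\rangle_{\mathcal{A}} = \langle CSx, x\rangle_{\mathcal{A}} = \langle C^{1/2}Sx, C^{1/2}x\rangle_{\mathcal{A}} \leq \|C^{1/2}\|^{2}\langle Sx, x\rangle_{\mathcal{A}},
\end{equation*}
using self-adjointness of $C^{1/2}$ in the middle step and the module norm estimate already employed in the preceding proposition in the last step. Chaining with the $\ast$-integral upper bound $\langle Sx, x\rangle_{\mathcal{A}} \leq B\langle x, x\rangle_{\mathcal{A}} B^{*}$ immediately gives the stated upper bound $(\|C^{1/2}\|B)\langle x, x\rangle_{\mathcal{A}}(\|C^{1/2}\|B)^{*}$.

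For the lower $C$-controlled bound I would dualize the computation via
\begin{equation*}
\langle Sx, x\rangle_{\mathcal{A}} = \langle C^{-1}CSx, x\rangle_{\mathcal{A}} = \|(C^{-1}CS)^{1/2}x\|^{2} \leq \|C^{-1/2}\|^{2}\|(CS)^{1/2}x\|^{2} = \|C^{-1/2}\|^{2}\langle S_C x, x\rangle_{\mathcal{A}}.
\end{equation*}
Combined with the hypothesis $A\langle x, x\rangle_{\mathcal{A}} A^{*} \leq \langle Sx, x\rangle_{\mathcal{A}}$, dividing through by $\|C^{-1/2}\|^{2}$ yields exactly $(\|C^{-1/2}\|^{-1}A)\langle x,x\rangle_{\mathcal{A}}(\|C^{-1/2}\|^{-1}A)^{*} \leq \langle S_C x, x\rangle_{\mathcal{A}}$, as desired.

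The main technical step requiring care is the squeezing $\langle C^{1/2}Sx, C^{1/2}x\rangle_{\mathcal{A}} \leq \|C^{1/2}\|^{2}\langle Sx, x\rangle_{\mathcal{A}}$ on the upper side, together with its dual counterpart $\|(C^{-1}CS)^{1/2}x\|^{2} \leq \|C^{-1/2}\|^{2}\|(CS)^{1/2}x\|^{2}$ on the lower side; both are instances of Lemma \ref{l1} applied to the positive factor $C^{\pm 1/2}$ evaluated on the vector furnished by the positive square root of the remaining factor. Since these are the very manipulations carried out in the preceding proposition, their justification transports verbatim, and the rest of the argument is bookkeeping of the constants $\|C^{\pm 1/2}\|$ to recover the claimed bounds.
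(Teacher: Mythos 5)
Your argument coincides with the paper's own proof essentially line for line: the same factorizations $C=C^{1/2}C^{1/2}$ and $I=C^{-1}C$, the same identification $\langle S_{C}x,x\rangle_{\mathcal{A}}=\langle CSx,x\rangle_{\mathcal{A}}$, and the same two norm estimates producing the constants $\|C^{1/2}\|$ and $\|C^{-1/2}\|^{-1}$. It therefore also inherits the paper's implicit reliance on steps such as $\|(C^{-1}CS)^{1/2}x\|^{2}\leq\|C^{-1/2}\|^{2}\|(CS)^{1/2}x\|^{2}$, which tacitly assumes $C$ and $S$ interact as if they commuted, but as a reconstruction of the intended proof it is the same approach.
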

\begin{proof}
	Let $F$ be an integral frame for $\mathcal{H}$ associted to $(\Omega , \mu)$ with bounds $A$ and $B$.  Then for all $x\in \mathcal{H}$, we have
	\begin{align*}
	A\langle x, x\rangle_{\mathcal{A}} A^{\ast}&\leq \langle Sx,x\rangle_{\mathcal{A}} \\
	&=\langle C^{-1}CSx,x\rangle_{\mathcal{A}} \\
	&=\langle (C^{-1}CS)^{\frac{1}{2}}x,(C^{-1}CS)^{\frac{1}{2}}x\rangle_{\mathcal{A}}\\
	&=\|(C^{-1}CS)^{\frac{1}{2}}x\|^{2}\\
	&\leq \|C^{\frac{-1}{2}}\|^{2} \|(CS)^{\frac{1}{2}}x\|^{2}\\
	&= \|C^{\frac{-1}{2}}\|^{2}\langle (CS)^{\frac{1}{2}}x,(CS)^{\frac{1}{2}}x\rangle_{\mathcal{A}}\\
	&=\|C^{\frac{-1}{2}}\|^{2}\langle (S_{C})^{\frac{1}{2}}x,(S_{C})^{\frac{1}{2}}x\rangle_{\mathcal{A}}\\
	&=\|C^{\frac{-1}{2}}\|^{2}\langle S_{C}x,x\rangle_{\mathcal{A}}.\\
	\end{align*}
	So,
	\begin{equation}
	(\|C^{\frac{-1}{2}}\|^{-1}A)\langle x, x\rangle_{\mathcal{A}} (\|C^{\frac{-1}{2}}\|^{-1}A)^{\ast}\leq \langle S_{C}x,x\rangle_{\mathcal{A}}
	\end{equation}
	Hence, for all $x\in \mathcal{H}$,
	\begin{align*}
	\langle S_{C}x,x\rangle_{\mathcal{A}}&=\langle CSx,x\rangle_{\mathcal{A}}\\
	&=\langle C^{\frac{1}{2}}Sx,C^{\frac{1}{2}}x\rangle_{\mathcal{A}}\\
	&\leq \|C^{\frac{1}{2}}\|^{2}\langle Sx,x\rangle_{\mathcal{A}} \\
	&\leq \|C^{\frac{1}{2}}\|^{2}B\langle x,x\rangle_{\mathcal{A}} B^{\ast} \\
	&= (\|C^{\frac{1}{2}}\|B)\langle x,x\rangle_{\mathcal{A}} (\|C^{\frac{1}{2}}\|B)^{\ast}.
	\end{align*}
	Therefore we conclude that $F$ is a $C$-controlled $\ast$-integral frame $\mathcal{H}$ associted to $(\Omega , \mu)$ with bounds $\|C^{\frac{-1}{2}}\|^{-1}A$ and $\|C^{\frac{1}{2}}\|B$.
\end{proof}
\begin{theorem}
	Let $\mathcal{H}$ be a Hilbert $\mathcal{A}$-module and $(\Omega , \mu)$ a measure space. Let $F$ a $C$-controlled $\ast$-integral frame for $\mathcal{H}$ associted to $(\Omega , \mu)$ with the frame operator $S_{C}$ and bounds $A$ and $B$. Let $K \in End_{\mathcal{A}}^{\ast}(\mathcal{H})$ a surjective operator such that $KC=CK$. Then $KF$ is a $C$-controlled $\ast$-integral frame for $\mathcal{H}$ with the operator frame $KS_{C}K^{\ast}$.
\end{theorem}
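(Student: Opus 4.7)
The plan is to mirror, almost verbatim, the argument that was carried out in the unstarred controlled-frame case earlier in the paper, the only modification being the bookkeeping of algebra-valued bounds rather than scalar ones. The starting move is to substitute $K^{\ast}x$ in place of $x$ in the defining inequality \eqref{eqd2} for the frame $F$, which gives
\begin{equation*}
A\langle K^{\ast}x,K^{\ast}x\rangle_{\mathcal{A}}A^{\ast}\leq \int_{\Omega}\langle K^{\ast}x,F_{\omega}\rangle_{\mathcal{A}}\langle CF_{\omega},K^{\ast}x\rangle_{\mathcal{A}}\,d\mu(\omega)\leq B\langle K^{\ast}x,K^{\ast}x\rangle_{\mathcal{A}}B^{\ast}.
\end{equation*}
Using the adjoint relation for $K$ and the commutation $KC=CK$, the integrand rewrites as $\langle x,KF_{\omega}\rangle_{\mathcal{A}}\langle CKF_{\omega},x\rangle_{\mathcal{A}}$, which is exactly the integrand required for the candidate frame $KF$.

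The next step is to convert the two outer expressions in $\langle K^{\ast}x,K^{\ast}x\rangle_{\mathcal{A}}$ back into expressions in $\langle x,x\rangle_{\mathcal{A}}$. For the upper estimate I invoke Lemma \ref{l1}, which yields $\langle K^{\ast}x,K^{\ast}x\rangle_{\mathcal{A}}\leq \|K\|^{2}\langle x,x\rangle_{\mathcal{A}}$; since $\|K\|$ is a positive real scalar it is central in $\mathcal{A}$ and can be pulled symmetrically through $B$ and $B^{\ast}$, giving an upper bound of the form $(\|K\|B)\langle x,x\rangle_{\mathcal{A}}(\|K\|B)^{\ast}$. For the lower estimate I use the surjectivity hypothesis together with Lemma \ref{l3}(ii), which supplies the operator inequality $\|(KK^{\ast})^{-1}\|^{-1}I_{\mathcal{H}}\leq KK^{\ast}$. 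Rewriting $\langle K^{\ast}x,K^{\ast}x\rangle_{\mathcal{A}}=\langle KK^{\ast}x,x\rangle_{\mathcal{A}}$ then produces $\|(KK^{\ast})^{-1}\|^{-1}\langle x,x\rangle_{\mathcal{A}}\leq \langle K^{\ast}x,K^{\ast}x\rangle_{\mathcal{A}}$, and absorbing the square root of the scalar factor symmetrically into $A$ delivers a lower bound of the form $(\|(KK^{\ast})^{-1}\|^{-1/2}A)\langle x,x\rangle_{\mathcal{A}}(\|(KK^{\ast})^{-1}\|^{-1/2}A)^{\ast}$. Taken together these two estimates show that $KF$ is a $C$-controlled $\ast$-integral frame, with explicit algebra-valued bounds $\|(KK^{\ast})^{-1}\|^{-1/2}A$ and $\|K\|B$.

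It remains to identify the frame operator. Here I apply Lemma \ref{l4} to pull the bounded adjointable map $K$ inside the Bochner integral and use $KC=CK$ once more:
\begin{equation*}
KS_{C}K^{\ast}x = K\!\int_{\Omega}\langle K^{\ast}x,F_{\omega}\rangle_{\mathcal{A}}CF_{\omega}\,d\mu(\omega) = \int_{\Omega}\langle x,KF_{\omega}\rangle_{\mathcal{A}}CKF_{\omega}\,d\mu(\omega),
\end{equation*}
which is by definition the frame operator associated with $KF$.

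The only place that genuinely needs care—and what distinguishes this $\ast$-frame version from the earlier controlled integral frame theorem—is the absorption step for the algebra-valued bounds: the scalar factors $\|K\|$ and $\|(KK^{\ast})^{-1}\|^{-1/2}$ must be threaded symmetrically through both $A,A^{\ast}$ and $B,B^{\ast}$. Because these scalars are positive real numbers and hence central in $\mathcal{A}$, the manoeuvre is legitimate, but it is the single point where the proof differs from a pure copy of the scalar-bound argument.
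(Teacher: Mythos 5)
Your proposal is correct and follows essentially the same route as the paper's proof: substitute $K^{\ast}x$ into \eqref{eqd2}, use Lemma \ref{l1} for the upper bound and surjectivity of $K$ via Lemma \ref{l3} for the lower bound, and identify the frame operator by pulling $K$ through the Bochner integral with Lemma \ref{l4}. The only difference is cosmetic: the paper states the new bounds as $A\|(KK^{\ast})^{-1}\|^{-1}\langle x,x\rangle_{\mathcal{A}}A^{\ast}$ and $B\|K\|^{2}\langle x,x\rangle_{\mathcal{A}}B^{\ast}$ rather than absorbing the square roots of the scalar factors into $A$ and $B$ as you do, which is an equivalent formulation since those scalars are central and positive.
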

\begin{proof}
	For all $x\in \mathcal{H}$ and \eqref{eqd2},we have,
	\begin{equation*}
	A\langle K^{\ast}x,K^{\ast}x\rangle_{\mathcal{A}} A^{\ast} \leq\int_{\Omega}\langle K^{\ast}x,F_{\omega}\rangle_{\mathcal{A}} \langle C F_{\omega},K^{\ast}x\rangle_{\mathcal{A}} d\mu(w)\leq B\langle  K^{\ast}x,K^{\ast}x\rangle_{\mathcal{A}} B^{\ast}, \quad  x\in \mathcal{H}.
	\end{equation*}
	By lemma \ref{l1} and lemma \ref{l3}, we obtain,
	\begin{equation*}
	A\|(KK^{\ast})^{-1}\|^{-1}\langle x,x\rangle_{\mathcal{A}} A^{\ast} \leq\int_{\Omega}\langle x,KF_{\omega}\rangle_{\mathcal{A}} \langle CK F_{\omega},x\rangle_{\mathcal{A}} d\mu(w)\leq B\|K\|^{2}\langle x,x\rangle_{\mathcal{A}} B^{\ast}, \quad  x\in \mathcal{H}.
	\end{equation*}
	which shows that $KF$ is a $C$-controlled $\ast$-integral operator.\\
	Moreover, by lemma \ref{l4}, we have,
	\begin{equation*}
	KS_{C}K^{\ast}x=K\int_{\Omega}\langle K^{\ast}x,F_{\omega}\rangle_{\mathcal{A}} CF_{\omega} d\mu (\omega)= \int_{\Omega}\langle x,KF_{\omega}\rangle_{\mathcal{A}} CKF_{\omega} d\mu (\omega)
	\end{equation*}
	which ends the proof
\end{proof}

\subsection*{Acknowledgment}
The authors would like to express their gratitude to the reviewers for helpful comments and suggestions.
\bibliographystyle{amsplain}

\end{document}